\newtheorem{thm}{Theorem}[section]
\newtheorem{lem}[thm]{Lemma}
\newtheorem{re}{Remark}[section]
\newtheorem*{thmA}{Theorem A}
\newtheorem*{thmB}{Theorem B}
\newtheorem*{thmC}{Theorem C}
\newtheorem*{thmD}{Theorem D}
\newtheorem*{MT1}{Main Theorem 1}
\newtheorem*{MT2}{Main Theorem 2}
\numberwithin{equation}{section}
\def \BN{\bar{\nabla}}
\def \N{\nabla}
\def \HQ{{Q^*}^m}
\def \HQQ{{Q^*}^m=SO^0_{2,m}/SO_2 SO_m}
\newcommand{\Ad}{\mathrm{Ad}}
\newcommand{\tr}{\mathrm{tr}}
\newcommand{\R}{\mathbb{R}}
\begin{document}

\title[Reeb parallel structure Jacobi operator]{Real hypersurfaces in the complex hyperbolic quadric with Reeb parallel structure Jacobi operator}
\author{\textsc{Hyunjin Lee and Young Jin Suh*}}

\address{\newline
Hyunjin Lee
\newline Research Institute of Real and Complex Manifolds
\newline Kyungpook National University
\newline 41566 Daegu, Republic of Korea}
\email{lhjibis@hanmail.net}

\address{ \newline
Young Jin Suh
\newline Department of Mathematics
\newline College of Natural Sciences
\newline Kyungpook National University
\newline Daegu 41566, Republic of Korea}
\email{yjsuh@knu.ac.kr}

\date{}

\begin{abstract}
We introduce the notion of Reeb parallel structure Jacobi operator for real hypersurfaces in the complex hyperbolic quadric ${\HQQ}$, $m \geq 3$, and give a classification theory for real hypersurfaces in ${\HQ}$, $m \geq 3$, with Reeb parallel structure Jacobi operator.
\end{abstract}

\maketitle
\thispagestyle{empty}

\footnote[0]{ * Corresponding author. \\
{2010 \textit{Mathematics Subject Classification}: Primary 53C40. Secondary 53C55.} \\
{\textit{Key words}: complex hyperbolic quadric, Reeb parallel structure Jacobi operator, $\mathfrak A$-isotropic, $\mathfrak A$-principal, complex structure, real structure.} \\
{The first author is supported by grant Proj. No. NRF-2019-R1I1A1A 01050300 from National Research Foundation of Korea and the second author by NRF-2018-R1D1A1B-05040381.}
}

\section{Introduction}\label{section 1}

As a dual space of $m$-dimensional complex quadric~$Q^m$, we can give a Riemannian symmetric spaces $\HQ$, which is said to be complex hyperbolic quadric. The complex hyperbolic quadric $\HQ$ is realized as the quotient manifold $G/K = SO_{2,m}^{0}/SO_{2}SO_{m}$, where the transitive group~$G:=SO^{0}_{2,m}$ of $Q^{*m}$ is given by the connected component of indefinite $(m+2){\times}(m+2)$-special orthogonal group $SO_{2,m}$ and $K:=SO_{2}SO_{m}$ is the isotropic subgroup of $G$. Then $Q^{*m}$ is the simply connected Riemannian symmetric space whose curvature tensor is the negative of the curvature tensor of $Q^m$, i.e. a complex hypersurface in the indefinite complex hyperbolic space $\mathbb C H_{1}^{m+1}$ with index~1. Accordingly, ${\HQ}$ admits two important geometric structures, a real structure~$A$ and a complex structure~$J$, which anti-commute with each other, that is, $AJ=-JA$ (see \cite{ANS}, \cite{SM}, \cite{Suh19} and \cite{SH19}).

%
%

\vskip 6pt

In addition to the complex structure $J$ there is another distinguished geometric structure on $\HQ$, namely a parallel rank two vector bundle ${\mathfrak A}$ which contains an $S^1$-bundle of real structures, that is, real structures~$A$ on the tangent spaces of $\HQ$. The set is denoted by ${\mathfrak A}_{p_{0}}=\{\lambda A_{{0}}{\vert}\, {\lambda}\in S^1{\subset}{\mathbb C}\}$, $p_{0}:=e K \in {\HQ}$. It is $\mathrm{Ad}(K)$-invariant, and generates an $G$-invariant $S^{1}$-subbundle $\mathfrak A$, the set of all complex conjugations defined on $\HQ$. Then ${\mathfrak A}$ becomes a parallel rank $2$-subbundle of $\text{End}(T{\HQ})$. This geometric structure determines a maximal ${\mathfrak A}$-invariant subbundle ${\mathcal Q}$ of the tangent bundle $TM$ of a real hypersurface $M$ in $\HQ$.  Here the notion of parallel vector bundle ${\mathfrak A}$ means that $({\bar\nabla}_XA)Y=q(X)JAY$ for any vector fields $X$ and $Y$ on $\HQ$, where  $\bar\nabla$ and ~$q$ denote a connection and a certain $1$-form defined on $T_{p}{\HQ}$, $p \in {\HQ}$ respectively (\cite{SM}).

\vskip 6pt

Recall that a nonzero tangent vector $W \in T_{p}{\HQ}$, $p \in Q^{*m}$, is called singular if it is tangent to more than one maximal flat in $\HQ$. There are two types of singular tangent vectors for the complex hyperbolic quadric~$\HQ$:
\begin{itemize}
\item If there exists a conjugation $A \in {\mathfrak A}_{p}$ such that $W \in V(A)=\{X \in T_{p}{Q^*}^m{\vert}\,AX=~X\}$, then $W$ is singular. Such a singular tangent vector is called {\it ${\mathfrak A}$-principal}.
\item If there exist a conjugation $A \in {\mathfrak A}_{p}$ and orthonormal vectors $Z_{1}$, $Z_{2} \in V(A)$ such that $W/||W|| = (Z_{1}+JZ_{2})/\sqrt{2}$, then $W$ is singular. Such a singular tangent vector is called \emph{${\mathfrak A}$-isotropic},
where $V(A)=\{X \in T_{p}{Q^*}^m{\vert}\, AX=X\}$ and $JV(A)=\{X \in T_{p}{Q^*}^m{\vert} \, AX=-X\}$ are the $(+1)$-eigenspace and $(-1)$-eigenspace for the involution $A$
on $T_{p}{Q^*}^{m}$, $p \in {Q^*}^{m}$.
\end{itemize}

\vskip 6pt

Now, let $M$ be a real hypersurface in K\"{a}hler manifold~$\widetilde M$, and denote by $(\phi, \xi, \eta, g)$ the induced almost contact metric structure of $M$. As a typical classification theorem for such real hypersurface, many geometers considered the condition that a real hypersurface~$M$ in $\widetilde M$ has {\it isometric Reeb flow}, which means that the Riemannian metric is invariant along the Reeb direction~$\xi=-JN$. Algebraically it is equivalent to the notion of commuting shape operator given by $S{\phi}={\phi}S$, where $S$ is the shape operator of $M$ defined by $\widetilde \nabla_{X}N =-SX$, $X \in TM$.

\vskip 6pt

For instance, Okumura~\cite{O} proved that the Reeb flow on a real hypersurface in complex projective space~${\mathbb C}P^m = SU_{m+1}/S(U_1U_m)$ is isometric if and only if $M$ is an open part of a tube around a totally geodesic ${\mathbb C}P^k \subset {\mathbb C}P^m$ for some $k \in \{0,\ldots,m-1\}$. For the complex 2-plane Grassmannian $G_2({\mathbb C}^{m+2})= SU_{m+2}/S(U_2U_m)$ a classification was obtained by Berndt and Suh~\cite{BS1}. The Reeb flow on a real hypersurface in $G_2({\mathbb C}^{m+2})$ is isometric if and only if $M$ is an open part of a tube around a totally geodesic $ G_2({\mathbb C}^{m+1})$ in $G_2({\mathbb C}^{m+2})$. For the complex quadric $Q^m = SO_{m+2}/SO_2SO_m$, Berndt and Suh~\cite{BS2} have obtained the following result:
\begin{thmA}\label{Theorem A}
Let $M$ be a real hypersurface of the complex quadric $Q^m$, $m\geq 3$. Then the Reeb flow on $M$ is isometric if and only if $m$ is even, say $m = 2k$, and $M$ is an open part of a tube around a totally geodesic ${\mathbb C}P^k \subset Q^{2k}$.
\end{thmA}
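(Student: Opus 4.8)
\section*{Proof proposal}

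The plan is to prove both implications, disposing of the ``if'' direction by direct computation and concentrating the effort on the ``only if'' direction. For sufficiency I would start from the known description of the tube of radius $r$ around the totally geodesic $\mathbb{C}P^k \subset Q^{2k}$: at every point its unit normal $N$ is $\mathfrak{A}$-isotropic, and its shape operator $S$ is diagonalizable with principal curvatures depending only on $r$, each principal curvature space being invariant under the structure tensor $\phi$. Since $S$ and $\phi$ are then simultaneously diagonal on each eigenspace, the relation $S\phi = \phi S$ holds, so the Reeb flow is isometric; this direction needs only the explicit tube geometry.

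For necessity I would assume $S\phi = \phi S$ and first locate the unit normal $N$ relative to the bundle $\mathfrak{A}$. Writing $N = \cos(t)\,Z_{1} + \sin(t)\,JZ_{2}$ for a suitable conjugation $A \in \mathfrak{A}$, orthonormal $Z_{1},Z_{2} \in V(A)$ and an angle function $t$, the extreme values of $t$ correspond respectively to $N$ being $\mathfrak{A}$-principal and $\mathfrak{A}$-isotropic. The first key step is to show that $N$ is $\mathfrak{A}$-isotropic everywhere. I would do this by feeding the commuting condition into the Codazzi equation $(\nabla_{X}S)Y - (\nabla_{Y}S)X = \bigl(\bar R(X,Y)N\bigr)^{\top}$, expanded with the explicit quadric curvature tensor (the pieces built from $g$, $J$, $A$ and $JA$), and then evaluating on a frame adapted to $Z_{1},Z_{2},\xi$ while using $\phi S = S\phi$ together with $AJ = -JA$. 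The $\mathfrak{A}$-principal component of $N$ generates curvature terms that cannot be balanced against the tangential derivatives of $S$, forcing that component to vanish.

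Once $N$ is $\mathfrak{A}$-isotropic, the tangent space decomposes into $\mathbb{R}\xi$, the maximal $\mathfrak{A}$-invariant subbundle $\mathcal{Q}$, and its small orthogonal complement, with every summand invariant under both $\phi$ and $S$ by the commuting hypothesis. The next step is to determine the principal curvatures: I would differentiate the isotropy relations along $M$, re-insert them into the Codazzi equation, and solve the resulting first-order relations among the principal curvature functions. This analysis forces the multiplicities to occur in pairs, which is precisely what makes $m$ even, $m = 2k$, and pins down the eigenvalue pattern characteristic of a tube.

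The final, and I expect hardest, step is the identification of $M$ as an open part of the tube. Having shown that the principal curvatures are constant with the predicted values and multiplicities, I would study the normal (focal) map $p \mapsto \exp_{p}(r\,N_{p})$ for the appropriate radius $r$, compute the relevant Jacobi fields to verify that its image is a totally geodesic complex submanifold, and recognize that submanifold as $\mathbb{C}P^{k}$ embedded totally geodesically in $Q^{2k}$; the hypersurface $M$ is then recovered as a tube over it. The main obstacle is the bookkeeping in the first step: one must control simultaneously the curvature contributions coming from $A$ and from $JA$ alongside the commuting relation, since it is exactly there that the rigidity is forced, namely both the exclusion of an $\mathfrak{A}$-principal normal and the evenness of $m$.
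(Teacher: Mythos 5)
This statement is Theorem~A, which the paper quotes from Berndt--Suh \cite{BS2} without proof, so there is no in-paper argument to compare against; the relevant benchmark is the cited proof and the parallel machinery the paper does develop for the hyperbolic quadric (Lemmas~\ref{lemma 3.2}--\ref{lemma 3.4} and \ref{lemma 4.1}--\ref{lemma 4.3}). Your outline coincides with that strategy: verify sufficiency from the explicit tube data, use the Codazzi equation with $Z=\xi$ to obtain the fundamental identity $2S\phi S-\alpha(\phi S+S\phi)=\pm 2\phi+(\text{terms in }A)$, combine it with $S\phi=\phi S$ to rule out an $\mathfrak{A}$-principal normal and to pin down the principal curvatures on $\mathcal{Q}$, and finish with the focal-map/Jacobi-field rigidity argument. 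One caveat on the first key step: your stated mechanism --- that the $\mathfrak{A}$-principal component of $N$ ``generates curvature terms that cannot be balanced against the tangential derivatives of $S$'' --- is not quite how the exclusion works. In the actual argument the derivative terms drop out entirely; what survives is a pointwise algebraic identity (every principal curvature $\lambda$ on $\mathcal{C}$ satisfies $\lambda^{2}-\alpha\lambda-1=0$), and the contradiction comes from combining $S\phi=\phi S$ with the fact that, for $AN=N$, the shape operator maps $\mathcal{C}$ into $V(A)$ while $\phi$ interchanges $V(A)$ and $JV(A)$ (the compact analogue of Lemma~\ref{lemma 4.2}(ii)), forcing $S=0$ on $\mathcal{C}$ and contradicting the quadratic relation. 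Likewise the evenness of $m$ is not a free consequence of ``multiplicities in pairs'' but requires identifying the two eigenspaces of $S$ on $\mathcal{Q}$ as complex subspaces interchanged by the anti-linear conjugation $A$, hence of equal complex dimension. These are gaps of execution rather than of strategy: the route you describe is the one that works, but the two pivotal steps are asserted rather than derived.
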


On the other hand, as non-compact type ambient spaces, for the complex hyperbolic space~${\mathbb C}H^m = SU_{1,m}/S(U_mU_1)$ a classification was obtained by Montiel and Romero~\cite{MR}. They proved that the Reeb flow on a real hypersurface in~${\mathbb C}H^m $ is isometric if and only if $M$ is an open part of a tube around a totally geodesic ${\mathbb C}H^k$ in ${\mathbb C}H^m$ for some $k \in \{0,\cdots,m-1\}$.
For the complex hyperbolic $2$-plane Grassmannian $G_2^{*}({\mathbb C}^{m+2})= SU_{2,m}/S(U_mU_2)$ the classification of isometric Reeb flow was obtained by Suh~\cite{S}. In this case, the Reeb flow on a real hypersurface in $G^{*}_2({\mathbb C}^{m+2})$ is isometric if and only if $M$ is an open part of a tube around a totally geodesic $G_2^{*}({\mathbb C}^{m+1}) \subset G_2^{*}({\mathbb C}^{m+2})$ or a horosphere with singular normal $JN \in {\mathfrak J}N$. The geometric construction of horospheres in a non-compact manifold of negative curvature was mainly discussed in the book due to Eberlein~\cite{E}.

\vskip 6pt

In the paper due to Suh~\cite{SC} we investigate this problem of isometric Reeb flow for the complex hyperbolic quadric ${\HQQ}$. In view of the previous results, naturally, we expected that the classification might include at least the totally geodesic ${Q^{* m-1}} \subset {\HQ}$. But, the results are quite different from our expectations. The totally geodesic submanifolds of the above type are not included. Now compared to Theorem~$\rm A$, we introduce the classification as follows:
\begin{thmB} \label{Theorem B}
Let $M$ be a real hypersurface of the complex hyperbolic quadric ${\HQ}$, $m \geq 3$. The Reeb flow on $M$ is isometric if and only if $m$ is even, say $m = 2k$, and $M$ is an open part of a tube around a totally geodesic ${\mathbb C}H^k \subset {Q^{2k}}^*$ or a horosphere whose center at infinity is $\mathfrak A$-isotropic singular.
\end{thmB}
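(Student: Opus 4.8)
The plan is to adapt the strategy that worked for the compact complex quadric (Theorem~A) to the non-compact dual $\HQ$, where the key new feature is that horospheres can appear. The backbone is the equivalence between isometric Reeb flow and the commuting condition $S\phi = \phi S$, so I would begin by analyzing this algebraic identity on the tangent bundle of $M$. First I would set up the basic equations of submanifold theory for a real hypersurface in $\HQ$: the Gauss and Weingarten formulae, the expression for the covariant derivatives of the induced structure $(\phi, \xi, \eta, g)$, and crucially the formula $(\BN_X A)Y = q(X)JAY$ governing the parallel rank-two bundle $\mathfrak A$. Differentiating the relation $AN = \cos(2t)N + \dots$ (decomposing the unit normal $N$ and the Reeb vector $\xi = -JN$ with respect to a chosen conjugation $A \in \mathfrak A$) produces identities linking $S$, $\phi$, $A$, and the normal $N$.

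The decisive structural step is to determine the type of the normal vector $N$. Using $S\phi = \phi S$ together with the Codazzi equation, I would show that the normal must be singular, i.e.\ either $\mathfrak A$-principal or $\mathfrak A$-isotropic, ruling out generic normals. The expectation, guided by Theorem~A, is that the $\mathfrak A$-principal case is obstructed: plugging an $\mathfrak A$-principal normal into the commuting condition and the Codazzi equation should force a contradiction (typically an impossibility on the eigenvalue structure of $S$, or on the $1$-form $q$), so that only the $\mathfrak A$-isotropic case survives. This is where I expect the main obstacle to lie, since the sign changes coming from the negative curvature of $\HQ$ relative to $Q^m$ alter the eigenvalue relations, and one must track these carefully to see both that $\mathfrak A$-principal is excluded and that the $\mathfrak A$-isotropic case does not collapse.

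Assuming the normal is $\mathfrak A$-isotropic, I would then analyze the shape operator on the maximal $\mathfrak A$-invariant distribution $\mathcal Q$ and its orthogonal complement. The commuting condition $S\phi = \phi S$ forces $S$ to preserve the complex (i.e.\ $J$-invariant) structure of these distributions, so I would diagonalize $S$ and use the Codazzi equation to compute how the principal curvatures vary. Integrating the resulting system of differential equations along geodesics should yield explicitly the two families of solutions: for suitable (necessarily even $m = 2k$) dimension one obtains the tube of a fixed radius around a totally geodesic $\mathbb C H^k \subset {Q^{2k}}^*$, while the limiting radius (equivalently, a degenerate eigenvalue configuration) produces the horosphere whose center at infinity is $\mathfrak A$-isotropic singular. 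The parity constraint $m = 2k$ should emerge naturally from the requirement that $J$ act compatibly on the eigenspaces of $S$, exactly as in Theorem~A.

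Finally, for the converse I would verify directly that each model in the classification has $S\phi = \phi S$. This amounts to computing the shape operators of the tube around $\mathbb C H^k$ and of the $\mathfrak A$-isotropic horosphere, which can be carried out via Jacobi field / Riccati-equation methods in the symmetric space $\HQ$, and checking the commuting relation eigenspace-by-eigenspace. The converse is expected to be routine once the explicit principal curvatures and their eigenspaces are in hand; the substance of the theorem is the forward direction and, within it, the exclusion of the $\mathfrak A$-principal normal.
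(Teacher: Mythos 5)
First, a point of comparison you could not have known: the paper does not prove Theorem~B at all. It is imported verbatim from \cite{SC} and used here only as an ingredient (together with Theorem~D) for the Main Theorems. The only fragments of its proof visible in this paper are Lemma~\ref{lemma 3.2}/Lemma~\ref{lemma 3.4} (the Codazzi-derived identity $2S\phi S-\alpha(\phi S+S\phi)=-2\phi$ on $\mathcal Q$ and $=-2\beta^{2}\phi$ on $\mathcal C\ominus\mathcal Q$) and Lemma~\ref{lemma 4.3} (constancy of $\alpha$ when the normal is $\mathfrak A$-isotropic). Your road map is the right one and is essentially the strategy of \cite{SC} and of Berndt--Suh's proof of Theorem~A, so there is no wrong turn in the plan itself.

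The genuine gap is that every step carrying the mathematical content is deferred to an ``expectation.'' Concretely: (i) the exclusion of the $\mathfrak A$-principal normal is a specific computation, not a heuristic: with $AN=N$ one has $\mathcal Q=\mathcal C$, so Lemma~\ref{lemma 3.4} together with $S\phi=\phi S$ forces every principal curvature $\lambda$ on $\mathcal C$ to satisfy $\lambda^{2}-\alpha\lambda+1=0$; on the other hand $ASY=SY$ for $Y\in\mathcal C$ (Lemma~\ref{lemma 4.2}) combined with $A\phi=-\phi A$ forces $\lambda=0$, and $0-0+1=0$ is the contradiction --- none of this is in your text. (ii) Before any eigenvalue analysis you must prove $\alpha$ is constant, which requires ruling out $S\phi+\phi S=0$; this is exactly the content of Lemma~\ref{lemma 4.3} and is not automatic. (iii) In the $\mathfrak A$-isotropic case the same quadratic $\lambda^{2}-\alpha\lambda+1=0$ on $\mathcal Q$ forces $\alpha^{2}\geq 4$, with $\alpha^{2}=4$ giving the horosphere and $\alpha^{2}>4$ the tube, while $\mathcal C\ominus\mathcal Q=\mathrm{Span}[A\xi,AN]$ carries the extra principal curvature $0$ (cf.\ Lemma~\ref{Lemma 7.1}); your proposal never identifies where the horosphere/tube dichotomy quantitatively comes from. (iv) ``Integrating the resulting system along geodesics'' is not a rigidity argument: the actual identification requires the focal-map/Jacobi-field analysis showing the focal set is a totally geodesic complex submanifold, the classification of totally geodesic submanifolds of $\HQ$ (which is where $m=2k$ really enters, since ${\mathbb C}H^{k}$ only occurs inside ${Q^{2k}}^{*}$), and a separate treatment of the no-focal-point (horosphere) case. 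As it stands the proposal is a correct outline of the known proof rather than a proof.
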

\noindent Hereafter, we denote $(\mathcal T_{A})$ and $(\mathcal H_{A})$ such tube and horosphere given in Theorem~$\rm B$, respectively. Then we see that $(\mathcal T_{A})$ and $(\mathcal H_{A})$ should be Hopf, that is, $S\xi = \alpha \xi$, and they have the $\mathfrak A$-isotropic singular normal vector field.

\vskip 6pt

On the other hand, Jacobi fields along geodesics of a given Riemannian manifold $(\bar M,g)$ satisfy a well known differential equation. This equation naturally inspires the so-called Jacobi operator. That is, if $R$ denotes the curvature operator of $\bar M$, and $X$ is a tangent vector field to $\bar M$, then the Jacobi operator $R_X \in \text{End}(T_p {\bar M})$ with respect to $X$ at $p \in \bar M$, defined by $(R_XY)(p)=(R(Y,X)X)(p)$ for any $Y \in T_p {\bar M}$, becomes a self adjoint endomorphism of the tangent bundle $T {\bar M}$ of $\bar M$. Thus, each tangent vector field $X$ to $\bar M$ provides a Jacobi operator $R_X$ with respect to $X$. In particular, for the Reeb vector field $\xi$, the Jacobi operator $R_{\xi}$ is said to be the {\it structure Jacobi operator}.

\vskip 6pt

Actually, many geometers have considered the condition that a real hypersurface $M$ in K\"{a}hler manifolds has {\it parallel structure Jacobi operator} (or {\it Reeb parallel structure Jacobi operator}, respectively), that is,  $\nabla_{X}R_{\xi}=0$ for any tangent vector field~$X$ on $M$ (or $\nabla_{\xi}R_{\xi}=0$, respectively). In \cite{KPSS}, Ki, P\'erez, Santos and Suh have investigated the Reeb parallel structure Jacobi operator in the complex space form $M_m(c)$, $c \neq 0$, and have used it to study some principal curvatures for a tube over a totally geodesic submanifold. On the other hand, P\'erez, Jeong and Suh~\cite{PJS} have investigated Hopf real hypersurfaces $M$ in $G_2({\mathbb C}^{m+2})$ with parallel structure Jacobi operator, that is, ${\nabla}_X{R}_{\xi}=0$ for any tangent vector field $X$ on $M$. Jeong, Suh and Woo \cite{JSW}  and  P\'erez and Santos \cite{PS} have generalized such a notion to the recurrent structure Jacobi operator, that is, $({\nabla}_X{R}_{\xi})Y={\beta}(X){R}_{\xi}Y$ for a certain $1$-form $\beta$ and any vector fields $X,Y$ on $M$ in $G_2({\mathbb C}^{m+2})$ or ${\mathbb C}P^m$. In~\cite{JLS}, Jeong, Lee, and Suh have considered a Hopf real hypersurface with Codazzi type of structure Jacobi operator, $(\nabla_{X}R_{\xi})Y =  (\nabla_{Y}R_{\xi})X$, in $G_{2}(\mathbb C^{m+2})$. Moreover, P\'erez, Santos and Suh~\cite{PSS} have further investigated the property of the Lie $\xi$-parallel structure Jacobi operator in complex projective space ${\mathbb C}P^m$, that is, ${\mathcal L}_{\xi}R_{\xi}=0$. In \cite{SuPeWo} Suh, P\'{e}rez, and Woo investigated the parallelism property with respect to the structure Jacobi operator~$R_{\xi}$ defined on $M$ in the complex hyperbolic quadric $\HQQ$ and gave the following result.
\begin{thmC}
There does not exist a Hopf real hypersurface in the complex hyperbolic quadrics~${Q^{*}}^{m}$, $m \geq 3$, with parallel structure Jacobi opeator, that is, $\nabla_{X}R_{\xi}=0$ for any tangent vector field~$X$ on $M$.
\end{thmC}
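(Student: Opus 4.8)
The plan is to derive a contradiction from the simultaneous assumptions that $M$ is Hopf ($S\xi=\alpha\xi$) and that the structure Jacobi operator is parallel ($\nabla_X R_\xi=0$ for all $X$). First I would write down an explicit expression for $R_\xi$ using the Gauss equation together with the known curvature tensor of the complex hyperbolic quadric. Since $\bar R$ for ${\HQ}$ has the form of a constant-curvature part twisted by the complex structure $J$ and the real structure $A$ (analogous to the compact quadric $Q^m$ but with opposite sign), the operator $R_\xi Y = (R(Y,\xi)\xi)$ splits into a multiple of the identity on the holomorphic plane, contributions involving $\phi$, and terms built from the normal data $A\xi$, $AN$, and the tangential part $BX$ of $AX$. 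The Hopf condition then lets me replace $S\xi$ by $\alpha\xi$ and simplify the Gauss-equation correction term $g(SY,\xi)S\xi - g(S\xi,\xi)SY$ throughout.

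Next I would impose $\nabla_\xi R_\xi=0$ only as a preliminary normalization, but the real leverage comes from the full parallel condition $\nabla_X R_\xi=0$ taken in several distinguished directions. I would differentiate the explicit formula for $R_\xi$ covariantly, using the structure equations $\nabla_X\xi=\phi SX$, the almost-contact identities $(\nabla_X\phi)Y=\eta(Y)SX-g(SX,Y)\xi$, and the parallelism $({\bar\nabla}_XA)Y=q(X)JAY$ of the bundle $\mathfrak A$ pushed down to $M$. This produces algebraic identities relating $S$, $\phi$, $\alpha$, and the tensor $B$ (the tangential component of $A$). A crucial early step is to split the analysis according to whether the unit normal $N$ is $\mathfrak A$-principal or $\mathfrak A$-isotropic, since these are the only singular types and the Hopf condition forces $N$ into a controlled position; in each case the quantities $A\xi$ and $AN$ have a known normal/tangential decomposition that drastically simplifies the differentiated expression.

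The hard part, which I expect to be the main obstacle, is extracting a genuine contradiction rather than merely a constraint on $\alpha$ and the principal curvatures. Parallelism of $R_\xi$ is a strong pointwise-plus-first-order condition, so after differentiating I expect to obtain a system forcing the derivative of $\alpha$ to vanish, forcing certain principal curvature functions to be constant, and simultaneously forcing an incompatibility between the sign of the ambient curvature and the spectral structure of $S$ restricted to the maximal $\mathfrak A$-invariant distribution $\mathcal Q$. Concretely, I anticipate that evaluating $(\nabla_X R_\xi)Y=0$ on pairs $X,Y$ chosen from the eigenspaces $V(A)$ and $JV(A)$, combined with the commutation/anticommutation relations $AJ=-JA$ and $A\phi+\phi A=?$ (the induced version involving $B$), yields an equation of the form $(\text{constant})\cdot(\text{nonzero geometric quantity})=0$ that cannot hold because the constant carries the definite sign coming from the negatively-curved ambient space. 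Managing the bookkeeping of the $\mathfrak A$-isotropic case, where $N$ involves a $Z_1+JZ_2$ combination and thus $B$ has a two-dimensional image transverse to $\xi$, will be the most delicate; I would handle it by choosing an adapted orthonormal frame aligned with $Z_1,Z_2$ and reducing every covariant derivative to scalar identities in $\alpha$, $q(\xi)$, and the eigenvalues of $S|_{\mathcal Q}$, from which the nonexistence then follows.
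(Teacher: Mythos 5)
First, note that Theorem~C is not proved in this paper at all: it is imported from \cite{SuPeWo}, and the body of the paper only treats the strictly weaker Reeb-parallel hypothesis $\nabla_\xi R_\xi=0$ (Sections~\ref{section 5}--\ref{section 8}). So your proposal can only be measured against the strategy of those sections and of \cite{SuPeWo}, and against that standard it has a genuine gap rather than being a complete argument. The central problem is your claim that ``the Hopf condition forces $N$ into a controlled position'' so that one may immediately split into the $\mathfrak A$-principal and $\mathfrak A$-isotropic cases. That is false: a Hopf hypersurface in ${Q^*}^m$ can a priori have a regular (non-singular) unit normal, $N=\cos(t)Z_1+\sin(t)JZ_2$ with $0<t<\pi/4$. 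Establishing that the parallelism hypothesis forces $N$ to be singular is itself the main content of a substantial step — in this paper it is Main Theorem~1, proved over all of Section~\ref{section 5} by extracting the scalar identities \eqref{e54}--\eqref{eq: 5.10} from $\nabla_\xi R_\xi=0$ and invoking Lemma~\ref{lemma singular}, which in turn requires first knowing something about $\alpha$. Your outline silently assumes the conclusion of that step, so the case division on which everything downstream rests is unjustified.

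The second, related, gap is that the contradiction itself is only anticipated, not derived: ``I expect to obtain \dots an equation of the form $(\text{constant})\cdot(\text{nonzero geometric quantity})=0$'' names no specific identity and no reason the quantity is nonzero. In the $\mathfrak A$-principal case the actual mechanism (see Section~\ref{section 8} for the Reeb-parallel analogue) is algebraic, not sign-theoretic: one combines the differentiated Jacobi operator with Lemma~\ref{lemma 3.2} and Lemma~\ref{lemma 4.2} to force $AY=Y$ on all of $\mathcal C$, which contradicts $\mathrm{Tr}\,A=0$; the negative ambient curvature plays no decisive role of the kind you describe. In the $\mathfrak A$-isotropic case one needs the specific facts $SAN=0$ and $SA\xi=0$ (Lemma~\ref{Lemma 7.1}) and then either the quadratic Hopf relation of Lemma~\ref{lemma 3.2} or the classification of Theorem~D to rule out the candidates; your frame-adapted bookkeeping plan does not identify which identity fails. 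Finally, the case $\alpha=0$ must be handled separately (the paper's own classification in Main Theorem~2 explicitly excludes it), and your proposal does not address it. As it stands the proposal is a reasonable road map that matches the general shape of the known argument, but the two load-bearing steps — singularity of $N$ and the terminal contradiction — are asserted rather than proved.
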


\vskip 6pt


Motivated by these results, in this paper we consider the case when $R_{\xi}$ of $M$ in $\HQ$ is Reeb parallel, that is, ${\nabla}_{\xi}{R}_{\xi}=0$, and first we prove the following:
\begin{MT1}\label{Main Theorem 1}
Let $M$ be a Hopf real hypersurface in $\HQ$, $m \geq 3$, with Reeb parallel structure Jacobi operator.  Then the unit normal vector field $N$ is singular, that is, $N$ is $\mathfrak A$-isotropic or $\mathfrak A$-principal.
\end{MT1}

On the other hand, in \cite{SuhHwang} we have considered the notion of Reeb parallel shape operator~$S$ for a real hypersurface $M$ in ${Q^*}^{m}$, that is, ${\nabla}_\xi{S}=0$, and have proved:
\begin{thmD}
Let $M$ be a Hopf real hypersurface in complex hyperbolic quadric $\HQ$, $m \geq 3$, with Reeb parallel shape operator and non-vanishing Reeb curvature. Then $M$ is an open part of the following:
\begin{enumerate}[\rm (1)]
\item {a tube around the totally geodesic $\mathbb C H^{k} \subset {Q^{*}}^{2k}$, where $m=2k$,}
\item {a horosphere whose center at infinity is $\mathfrak A$-isotropic singular,}
\item {a tube around the totally geodesic Hermitian symmetric space ${Q^{*}}^{m-1}$ embedded in ${Q^{*}}^{m}$,}
\item {a horosphere in ${Q^{*}}^{m}$ whose center at infinity is the equivalence class of an $\mathfrak A$-principal geodesic in ${Q^{*}}^{m}$,}
\item {a tube around the $m$-dimensional real hyperbolic space $\mathbb R H^{m}$ which is embedded in ${Q^{*}}^{m}$ as a real space form, or otherwise}
\item {$M$ has two distinct constant principal curvature given by
\begin{equation*}
\alpha, \quad \lambda =\frac{\alpha^{2} - 2}{\alpha}
\end{equation*}
with multiplicities $m$ and $(m-1)$, respectively.}
\end{enumerate}
\end{thmD}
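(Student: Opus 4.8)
The plan is to combine the Reeb-parallel hypothesis $\N_\xi S = 0$ with the Codazzi equation of $M$ in $\HQ$ and to read off the classification from the singular structure of the unit normal. First I would extract the two elementary consequences of the hypotheses. Applying $\N_\xi S = 0$ to $\xi$ gives $(\N_\xi S)\xi = (\xi\alpha)\xi = 0$ (using $\N_\xi\xi = \phi S\xi = \alpha\phi\xi = 0$), hence $\xi\alpha = 0$; together with the Hopf gradient relation for hypersurfaces in $\HQ$ this yields that the Reeb curvature $\alpha$ is constant on $M$ (and $\alpha \neq 0$ by assumption). Next I would substitute $X = \xi$ into the Codazzi equation $(\N_X S)Y - (\N_Y S)X = \big(\bar R(X,Y)N\big)^{\top}$. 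Since $\N_\xi S = 0$ and $(\N_Y S)\xi = \alpha\,\phi SY - S\phi SY$ (using $\alpha$ constant and $\N_Y\xi = \phi SY$), this collapses to the fundamental equation
\begin{equation*}
S\phi SY - \alpha\,\phi SY = \big(\bar R(\xi,Y)N\big)^{\top}, \qquad Y \in TM.
\end{equation*}

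Second, I would insert the explicit curvature tensor of $\HQ$ (the negative of that of $Q^{m}$) together with the singular decomposition of $N$: at each point choose $A \in \mathfrak A$ and $t \in [0,\pi/4]$ with $N = \cos t\, Z_1 + \sin t\, JZ_2$ for orthonormal $Z_1, Z_2 \in V(A)$, so that $g(AN,N) = \cos 2t = -g(A\xi,\xi)$, with $t = 0$ (resp. $t = \pi/4$) characterizing the $\mathfrak A$-principal (resp. $\mathfrak A$-isotropic) normal. Evaluating the fundamental equation and its covariant derivative on the adapted frame $\{\xi, A\xi, AN, \phi A\xi,\dots\}$ and pairing against these vectors reduces the geometry to scalar relations among $\alpha$, $\cos 2t$, and the principal curvatures. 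The central technical goal at this stage, and the first real obstacle, is to prove that $N$ is \emph{singular}, i.e. that the components off the $\mathfrak A$-principal and $\mathfrak A$-isotropic directions are forced to vanish, yielding $\cos 2t \in \{0,1\}$; this is the analogue for $S$ of the Main Theorem~1 conclusion for $R_\xi$, and I expect it to require feeding the Reeb-parallel condition back into the derivatives of $\cos 2t$ in order to kill every intermediate configuration.

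Once $N$ is singular the argument splits. In the $\mathfrak A$-isotropic branch ($t = \pi/4$) the fundamental equation simplifies, on the maximal $\mathfrak A$-invariant subbundle $\mathcal Q$, to a relation of the form $S\phi SY - \alpha\,\phi SY = \phi Y$; combined with the isotropy of $N$ and the self-adjointness of $S$ this forces $S\phi = \phi S$ on $\mathcal Q$, whence $S$ has the two constant eigenvalues $\tfrac12(\alpha \pm \sqrt{\alpha^{2}+4})$ with $\phi$-invariant eigenspaces. Invoking the isometric-Reeb-flow classification (Theorem~B) then identifies $M$ with a tube around a totally geodesic $\mathbb{C}H^{k} \subset {Q^{*}}^{2k}$ or with a horosphere whose center at infinity is $\mathfrak A$-isotropic singular, giving cases~(1) and~(2).

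In the $\mathfrak A$-principal branch ($t = 0$) one has $AN = N$ and $A\xi = -\xi$, and the fundamental equation becomes the eigenvalue relation $2\lambda\mu = \alpha(\lambda + \mu) - 2$ for principal curvatures $\lambda$ on $\xi^{\perp}$ and $\mu$ the eigenvalue on $\phi$ of the corresponding eigenvector. Here I would diagonalize $S$ on $\xi^{\perp}$ and solve this quadratic together with the constancy of $\alpha$: the $\phi$-invariant solutions yield the contact-type tube around ${Q^{*}}^{m-1}$, the horosphere with $\mathfrak A$-principal center, and the tube around $\mathbb{R}H^{m}$ (cases~(3)--(5)), while the degenerate solution in which $\phi$ carries the $\lambda$-eigenspace into the $\alpha$-eigenspace (so $\mu = \alpha$) collapses to $\lambda = (\alpha^{2}-2)/\alpha$ with multiplicities $m-1$ and $m$, giving case~(6). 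The main obstacle throughout is not the algebra but the identification step: translating each discrete principal-curvature pattern into the correct homogeneous model requires the tube and horosphere constructions in ${Q^{*}}^{m}$ and the classification of $\mathfrak A$-principal (contact) Hopf hypersurfaces, and one must verify that no intermediate angle $t$ nor spurious eigenvalue configuration survives the Reeb-parallel constraint.
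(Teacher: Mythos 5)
Theorem~D is not proved in this paper at all: it is quoted verbatim from Suh--Hwang \cite{SuhHwang} (\emph{Ann.\ Mat.\ Pura Appl.}~196 (2017)) and used as a black box in Section~6, so there is no internal proof to compare your sketch against. Judged on its own merits, your overall strategy (reduce to the singularity of $N$, then split into the $\mathfrak A$-isotropic branch, where commutativity $S\phi=\phi S$ lets you invoke Theorem~B, and the $\mathfrak A$-principal branch, where the eigenvalue relations produce the contact-type models) is the standard and correct architecture for this kind of classification. However, there are two concrete problems.

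First, your opening step is circular. From $(\nabla_\xi S)\xi=(\xi\alpha)\xi=0$ you correctly get $\xi\alpha=0$, but the Hopf gradient relation \eqref{e31} then gives $Y\alpha=-2g(Y,AN)\,g(A\xi,\xi)$, not $Y\alpha=0$: the term $-2g(Y,AN)g(\xi,A\xi)$ survives and vanishes only after you already know that $N$ is singular (either $\beta=g(A\xi,\xi)=0$ or $AN=N$ normal). So constancy of $\alpha$ cannot precede the singularity argument; compare Lemma~\ref{lemma singular}, which runs in exactly the opposite direction (constant $\alpha$ $\Rightarrow$ singular $N$). Since the singularity of $N$ is also the step you explicitly defer (``I expect it to require feeding the Reeb-parallel condition back into the derivatives of $\cos 2t$''), the crux of the whole proof --- ruling out every intermediate angle $0<t<\pi/4$ --- is never actually carried out. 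This is the genuine gap: everything downstream (both branches and the model identification) is conditional on it.

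Second, a computational slip in the isotropic branch: on $\mathcal Q$ the identity of Lemma~\ref{lemma 3.4} reads $2S\phi S-\alpha(\phi S+S\phi)=-2\phi$, so once $S\phi=\phi S$ the principal curvatures on $\mathcal Q$ satisfy $\lambda^2-\alpha\lambda+1=0$, i.e.\ $\lambda=\tfrac12\bigl(\alpha\pm\sqrt{\alpha^2-4}\bigr)$ (consistent with $\lambda=\coth r$, $\mu=\tanh r$, $\lambda\mu=1$ for the tube around $\mathbb C H^k$), not $\tfrac12\bigl(\alpha\pm\sqrt{\alpha^2+4}\bigr)$ as you wrote. This does not affect the identification via Theorem~B, but it should be corrected.
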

\noindent Using Main Theorem~1 and Theorem~$\rm D$, we give a classification for Hopf real hypersurfaces in the complex hyperbolic quadric $\HQ$ with Reeb parallel structure Jacobi operator as follows:
\begin{MT2}\label{Main Theorem 2}
Let $M$ be a Hopf real hypersurface in the complex hyperbolic quadric $\HQ$, $m \geq 3$, with Reeb parallel structure Jacobi operator. If the Reeb curvature function~$\alpha:=g(S\xi, \xi)$ is non-vanishing,
then $M$ is locally congruent to the one of the following:
\begin{enumerate}[\rm (1)]
\item a tube around the totally geodesic ${\mathbb C}H^k{\subset}{Q^{*}}^{2k}$, where $m=2k$,
\item a horosphere whose center at infinity is $\mathfrak A$-isotropic singular.
\end{enumerate}
\end{MT2}

\vskip 10pt

\section{The Complex Hyperbolic Quadric}\label{section 2}

In this section, let us introduce known results about the complex hyperbolic quadric ${Q^*}^m$. This section is due to Suh and Hwang~\cite{SH19}.

\vskip 6pt

The $m$-dimensional complex hyperbolic quadric ${\HQ}$ is the non-compact dual of the $m$-dimensional complex quadric $Q^m$, i.e.~the simply connected Riemannian symmetric
space whose curvature tensor is the negative of the curvature tensor of $Q^m$.

The complex hyperbolic quadric ${\HQ}$ cannot be realized as a homogeneous complex hypersurface of the complex hyperbolic space ${\mathbb C}H^{m+1}$. In fact, Smyth~\cite[Theorem~3(ii)]{SM2} has shown that every homogeneous complex hypersurface in ${\mathbb C}H^{m+1}$ is totally geodesic. This is in marked contrast to the situation for the complex quadric $Q^m$, which can be realized as a homogeneous complex hypersurface of the complex projective space ${\mathbb C}P^{m+1}$ in such a way that the shape operator for any unit normal vector to $Q^m$ is a real structure on the corresponding
tangent space of $Q^m$, (see \cite{R} and \cite{SH19}). Another related result by Smyth, \cite[Theorem~1]{SM2}, which states that any complex hypersurface of ${\mathbb C}H^{m+1}$ for which the square of the shape operator has constant eigenvalues (counted with multiplicity) is totally geodesic, also precludes the possibility of a model of ${Q^*}^m$ as a complex
hypersurface of ${\mathbb C}H^{m+1}$ with the analogous property for the shape operator.

\vskip 6pt

Therefore we realize the complex hyperbolic quadric ${Q^*}^m$ as the quotient manifold $SO^0_{2,m}/SO_2 SO_m$. As ${Q^*}^1$ is isomorphic to the real hyperbolic space
$\mathbb{R}H^2 = SO^0_{1,2}/SO_2$, and ${Q^*}^2$ is isomorphic to the Hermitian product of complex hyperbolic spaces $\mathbb{C}H^1 \times \mathbb{C}H^1$, we suppose $m\geq 3$
in the sequel and throughout this paper. Let $G:= SO^0_{2,m}$ be the transvection group of ${Q^*}^m$ and $K := SO_2 SO_m$ be the isotropy group of ${Q^*}^m$ at the ``origin''
$p_0 := eK \in {Q^*}^m$. Then
$$
\sigma: G \to G,\; g \mapsto sgs^{-1} \quad\text{with}\quad s :=\left(
\begin{smallmatrix}
-1 & & & & & \\
& -1 & & & & \\
& & 1 & & &  \\
& & & 1 & &  \\
& & & & \ddots & \\
& & & & & 1
\end{smallmatrix}
\right)
$$
is an involutive Lie group automorphism of $G$ with $\mathrm{Fix}(\sigma)_0 = K$, and therefore ${Q^*}^m = G/K$ is a Riemannian symmetric space. The center of the isotropy group $K$ is
isomorphic to $SO_2$, and therefore ${Q^*}^m$ is in fact a Hermitian symmetric space.

\vskip 6pt

The Lie algebra $\mathfrak{g} := \mathfrak{so}_{2,m}$ of $G$ is given by
$$ \mathfrak{g} = \bigr\{ X \in \mathfrak{gl}(m+2,\mathbb{R}) \bigr| X^t \cdot s = -s \cdot X \bigr\} $$
(see \cite[p.~59]{Kna}). In the sequel we will write members of $\mathfrak{g}$ as block matrices with respect to the decomposition $\mathbb{R}^{m+2}=\mathbb{R}^2 \oplus \mathbb{R}^m$, i.e.~in the form
$$ X = \left( \begin{smallmatrix} X_{11} & X_{12} \\ X_{21} & X_{22} \end{smallmatrix} \right) \;, $$
where $X_{11}$, $X_{12}$, $X_{21}$, $X_{22}$ are real matrices of dimensions $2\times 2$, $2\times m$, $m\times 2$ and $m\times m$, respectively. Then
$$ \mathfrak{g} = \left\{ \; \left. \left( \begin{smallmatrix} X_{11} & X_{12} \\ X_{21} & X_{22} \end{smallmatrix} \right) \;\right|\; X_{11}^t=-X_{11}, \; X_{12}^t = X_{21},\; X_{22}^t = -X_{22} \;\right\} \; . $$
The linearization \,$\sigma_L=\mathrm{Ad}(s): \mathfrak{g}\to\mathfrak{g}$\, of the involutive Lie group automorphism \,$\sigma$\, induces the Cartan decomposition $\mathfrak{g} = \mathfrak{k}
\oplus \mathfrak{m}$, where the Lie subalgebra
\begin{equation*}
\begin{split}
 \mathfrak{k} &= \mathrm{Eig}(\sigma_{L},1) = \{ X \in \mathfrak{g} | sXs^{-1}=X\} \\
  & = \left\{ \; \left. \left( \begin{smallmatrix} X_{11} & 0 \\ 0 & X_{22} \end{smallmatrix} \right) \;\right|\; X_{11}^t=-X_{11}, \; X_{22}^t = -X_{22} \;\right\}\\
  & \cong  \mathfrak{so}_2 \oplus \mathfrak{so}_m
\end{split}
\end{equation*}
is the Lie algebra of the isotropy group $K$, and the $2m$-dimensional linear subspace
$$
 \mathfrak{m} = \mathrm{Eig}(\sigma_{L},-1) = \{ X \in \mathfrak{g} | sXs^{-1}=-X\}\\
  = \left\{ \; \left. \left( \begin{smallmatrix} 0 & X_{12} \\ X_{21} & 0 \end{smallmatrix} \right) \;\right|\; X_{12}^t
  = X_{21} \;\right\}
 $$
is canonically isomorphic to the tangent space $T_{p_0}{Q^*}^m$. Under the identification $T_{p_0}{Q^*}^m \cong \mathfrak{m}$, the Riemannian metric $g$ of ${Q^*}^m$ (where the constant factor of the metric is chosen so that the formulae become as simple as possible) is given by
$$ g(X,Y) = \frac{1}{2}\,\tr(Y^t \cdot X) = \tr(Y_{12}\cdot X_{21}) \quad\text{for}\quad X,Y \in \mathfrak{m}.$$
$g$ is clearly $\Ad(K)$-invariant, and therefore corresponds to an $G$-invariant Riemannian metric on ${Q^*}^m$.
The complex structure $J$ of the Hermitian symmetric space is given by
$$ JX = \Ad(j)X \quad\text{for}\quad X \in \mathfrak{m}, \quad \text{where}\quad j := \left( \begin{smallmatrix} 0 & 1 & & & & \\ -1 & 0 & & & & \\ & & 1 & & & \\ & & & 1 & & \\ & & & & \ddots & \\ & & & & & 1
\end{smallmatrix} \right) \in K \; .
$$
Because $j$ is in the center of $K$, the orthogonal linear map $J$ is $\Ad(K)$-invariant, and thus defines an $G$-invariant Hermitian structure on ${Q^*}^m$. By identifying the
multiplication with the unit complex number $i$ with the application of the linear map $J$, the tangent spaces of ${Q^*}^m$ thus become $m$-dimensional complex linear spaces,
and we will adopt this point of view in the sequel.

\vskip 6pt

As for the complex quadric, there is another important structure on the tangent bundle of the complex quadric besides the Riemannian metric and the complex structure,
namely an $S^1$-bundle $\mathfrak{A}$ of real structures. The situation here differs from that of the complex quadric in that for ${Q^*}^m$, the real structures in $\mathfrak{A}$ cannot be
interpreted as the shape operators of a complex hypersurface in a complex space form, but as the following considerations will show, $\mathfrak{A}$ still plays an important role in the description
of the geometry of ${Q^*}^m$.

\vskip 6pt

Let
$$
a_0 := \left( \begin{smallmatrix} 1 & & & & & \\ & -1 & & & & \\ & & 1 & & & \\ & & & 1 & &  \\ & & & & \ddots & \\ & & & & & 1 \end{smallmatrix} \right) \; .
$$
Note that we have $a_0 \not\in K$, but only $a_0 \in O_2\,SO_m$. However, $\Ad(a_0)$ still leaves $\mathfrak{m}$ invariant, and therefore defines an $\mathbb{R}$-linear map $A_0$ on the tangent space
$\mathfrak{m} \cong T_{p_0}{Q^*}^m$. $A_0$ turns out to be an involutive orthogonal map with $A_0 \circ J = -J \circ A_0$\, (i.e.~$A_0$ is anti-linear with respect to the complex structure of $T_{p_0}{Q^*}^m$), and hence a real structure on $T_{p_0}{Q^*}^m$. But $A_0$ commutes with $\Ad(g)$ not for all $g \in K$, but only for $g \in SO_m \subset K$. More specifically, for $g=(g_1,g_2) \in K$ with $g_1 \in SO_2$ and $g_2 \in SO_m$, say $g_1 = \left( \begin{smallmatrix} \cos(t) & -\sin(t) \\ \sin(t) & \cos(t) \end{smallmatrix} \right)$ with $t \in \R$ (so that $\Ad(g_1)$ corresponds to multiplication with the complex number $\mu := e^{it}$), we have
$$ A_0 \circ \Ad(g) = \mu^{-2} \cdot \Ad(g) \circ A_0 \; . $$
This equation shows that the object which is \,$\Ad(K)$-invariant and therefore geometrically relevant is not the real structure $A_0$ by itself, but rather the ``circle of real structures''
$$ \mathfrak{A}_{p_0} := \{ \lambda\,A_0 | \lambda \in S^1 \} \; . $$
$\mathfrak{A}_{p_0}$ is $\Ad(K)$-invariant, and therefore generates an $G$-invariant $S^1$-subbundle $\mathfrak{A}$ of the endomorphism bundle $\mathrm{End}(T{Q^*}^m)$, consisting of real structures on the tangent spaces of ${Q^*}^m$. For any $A \in \mathfrak{A}$, the tangent line to the fibre of $\mathfrak{A}$ through $A$ is spanned by $JA$.

\vskip 6pt

For any $p\in {Q^*}^m$ and $A \in \mathfrak{A}_p$, the real structure $A$ induces a splitting
$$ T_p{Q^*}^m = V(A) \oplus JV(A) $$
into two orthogonal, maximal totally real subspaces of the tangent space $T_p{Q^*}^m$. Here $V(A)$ (resp., $JV(A)$) is the $(+1)$-eigenspace (resp., ~the $(-1)$-eigenspace) of $A$. For every unit vector $Z \in T_p{Q^*}^m$ there exist $t\in [0,\frac{\pi} {4}]$, $A \in \mathfrak{A}_p$ and orthonormal vectors $Z_{1}$, $Z_{2} \in V(A)$ so that
$$ Z = \cos(t) Z_{1} + \sin(t) JZ_{2} $$
holds; see \cite[Proposition~3]{R}. Here $t$ is uniquely determined by $Z$. The vector $Z$ is singular, i.e.~contained in more than one Cartan subalgebra of $\mathfrak{m}$, if and only if
either $t=0$ or $t=\tfrac\pi4$ holds. The vectors with $t=0$ are called \emph{$\mathfrak{A}$-principal}, whereas the vectors with $t=\tfrac\pi4$ are called \emph{$\mathfrak{A}$-isotropic}.
If $Z$ is regular, i.e.~$0<t<\tfrac\pi4$ holds, then also $A$ and $Z_{1}$, $Z_{2}$ are uniquely determined by $Z$.

\vskip 6pt

As for the complex quadric, the Riemannian curvature tensor $\bar R$ of ${Q^*}^m$ can be fully described in terms of the ``fundamental geometric structures'' $g$, $J$ and $\mathfrak{A}$. In fact, under the correspondence $T_{p_0}{Q^*}^m \cong \mathfrak{m}$, the curvature ${\bar R}(X,Y)Z$ corresponds to $-[[X,Y],Z]$ for $X,Y,Z \in \mathfrak{m}$, see \cite[Chapter~XI, Theorem~3.2(1)]{KO}.
By evaluating the latter expression explicitly, one can show that one has
\begin{equation}\label{RCTHQ}
\begin{split}
{\bar R}(X,Y)Z & = -g(Y,Z)X + g(X,Z)Y \\
& \quad \,  - \, g(JY,Z)JX + g(JX,Z)JY + 2g(JX,Y)JZ \\
& \quad \,  - \, g(AY,Z)AX + g(AX,Z)AY \\
& \quad \,  - \,g(JAY,Z)JAX + g(JAX,Z)JAY
\end{split}
\end{equation}
for arbitrary $A \in \mathfrak{A}_{p_0}$. Therefore the curvature of ${Q^*}^m$ is the negative of that of the complex quadric $Q^m$, compare \cite[Theorem~1]{R}. This confirms that the
symmetric space ${Q^*}^m$ which we have constructed here is indeed the non-compact dual of the complex quadric.

\vskip 10pt

\section{Some General Equations}\label{section 3}

Let $M$ be a  real hypersurface in the complex hyperbolic quadric ${Q^*}^m$.
For any vector field $X$ on $M$ in $\HQ$, we may decompose $JX$ as
\begin{equation*}
JX={\phi}X+{\eta}(X)N
\end{equation*}
where $N$ denotes a unit normal vector field to $M$, the vector field ${\xi}=-JN$ is said to be {\it Reeb} vector field, and the $1$-form $\eta$ is given by ${\eta}(X)=g({\xi},X)$.
Then naturally $M$ admits an almost contact metric structure $(\phi,\xi,\eta,g)$ induced from the K\"ahler structure $(J, g)$ of ${Q^*}^m$ satisfying
$${\phi}^2=-I+{\eta}{\otimes}{\xi},\quad {\phi}{\xi}=0,\quad {\eta}({\xi})=1.$$
The tangent bundle $TM$ of $M$ splits orthogonally into  $TM = {\mathcal C} \oplus {\mathbb R}\xi$, where ${\mathcal C} = {\rm ker}(\eta)$ is the maximal complex subbundle of $TM$. The structure tensor field $\phi$ restricted to ${\mathcal C}$ coincides with the complex structure $J$ restricted to ${\mathcal C}$.

\vskip 6pt

At each point $p \in M$ we again define the maximal ${\mathfrak A}$-invariant subspace of $T_{p}M$
\[
{\mathcal Q}_p = \{X \in T_{p}M \mid AX \in T_{p}M\ {\rm for\ all}\ A \in {\mathfrak A}_{p}\}.
\]
\begin{lem}\label{lemma 3.1}
For each $p \in M$ we have
\begin{itemize}
\item[(i)] If $N_p$ is ${\mathfrak A}$-principal, then ${\mathcal Q}_{p} = {\mathcal C}_{p}$.
\item[(ii)] If $N_{p}$ is not ${\mathfrak A}$-principal, there exist a conjugation $A \in {\mathfrak A}$ and orthonormal vectors $X,Y \in V(A)$ such that $N_{p} = \cos(t)X + \sin(t)JY$ for some $t \in (0,\pi/4]$.
Then we have ${\mathcal Q}_{p} = {\mathcal C}_{p} \ominus {\mathbb C}(JX + Y)$.
\end{itemize}
\end{lem}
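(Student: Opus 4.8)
The plan is to eliminate the quantifier over the whole circle $\mathfrak{A}_p$, reducing membership in $\mathcal{Q}_p$ to a pair of scalar conditions against one fixed conjugation, and then to read off the answer from the eigenspace splitting $T_p\HQ = V(A)\oplus JV(A)$. Fix a conjugation $A \in \mathfrak{A}_p$ adapted to the normal. Since the tangent line to the fibre of $\mathfrak{A}$ through $A$ is spanned by $JA$, every element of $\mathfrak{A}_p$ has the form $(\cos\theta\,\mathrm{id}+\sin\theta\,J)A$, $\theta \in \R$. For $W \in T_pM$ the function
\[
\theta \longmapsto g\bigl((\cos\theta\,\mathrm{id}+\sin\theta\,J)AW,\,N\bigr)=\cos\theta\,g(AW,N)+\sin\theta\,g(JAW,N)
\]
is a single harmonic, hence vanishes for all $\theta$ iff both coefficients vanish. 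As $J$ is skew-symmetric and $JN=-\xi$, the second coefficient is $g(JAW,N)=-g(AW,JN)=g(AW,\xi)$. Therefore
\[
W\in\mathcal{Q}_p \iff g(AW,N)=0 \text{ and } g(AW,\xi)=0 \iff AW\in\mathcal{C}_p,
\]
so $\mathcal{Q}_p=\{W\in T_pM : AW\in\mathcal{C}_p\}=T_pM\cap A(\mathcal{C}_p)$, and $A(\mathcal{C}_p)=\{AN,A\xi\}^{\perp}$ because $A$ is isometric. This is the working description I would carry through both cases.

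For part (i) the normal is $\mathfrak{A}$-principal, so I may take $A$ with $AN=N$; then $A\xi=-AJN=JAN=JN=-\xi$, so $A$ preserves the plane $\mathbb{C}N=\mathrm{span}\{N,\xi\}$ and hence its orthogonal complement $\mathcal{C}_p$. Thus $A(\mathcal{C}_p)=\mathcal{C}_p$ and the description collapses to $\mathcal{Q}_p=T_pM\cap\mathcal{C}_p=\mathcal{C}_p$.

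For part (ii) I would write $N=\cos t\,X+\sin t\,JY$ with $X,Y\in V(A)$ orthonormal and $t\in(0,\pi/4]$, and record, using $AX=X$, $AY=Y$ and $AJ=-JA$,
\[
\xi=\sin t\,Y-\cos t\,JX,\quad AN=\cos t\,X-\sin t\,JY,\quad A\xi=\sin t\,Y+\cos t\,JX.
\]
All of $N,\xi,AN,A\xi$ lie in the totally real $4$-plane $\Pi=\mathrm{span}_{\R}\{X,Y,JX,JY\}$, on which $X,Y,JX,JY$ are orthonormal; consequently the condition $W\perp\{N,AN,A\xi\}$ is automatic on $\Pi^{\perp}$, so $\Pi^{\perp}\subseteq\mathcal{Q}_p$ and the whole question localizes to $\Pi$. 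There it is pure linear algebra: I would compute the Gram relations of $N,\xi,JX+Y$ and $J(JX+Y)=JY-X$ — noting in particular that $JX+Y\perp N$ always while $g(JX+Y,\xi)=\sin t-\cos t$ — and verify that they identify $\mathcal{Q}_p$ with the orthogonal complement of the complex line $\mathbb{C}(JX+Y)=\mathrm{span}\{JX+Y,\,JY-X\}$ inside $\mathcal{C}_p$, that is, $\mathcal{Q}_p=\mathcal{C}_p\ominus\mathbb{C}(JX+Y)$.

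The main obstacle is precisely this bookkeeping inside $\Pi$ in part (ii): one must track the $t$-dependent coefficients with care and pay special attention to the boundary value $t=\pi/4$, the $\mathfrak{A}$-isotropic case, where $A$, $X$, $Y$ are no longer uniquely determined by $N$ and several of the Gram products degenerate, so that the identification of the excised complex line has to be checked to be independent of the chosen representatives. Away from this delicate point, every identity used is an immediate consequence of $A^2=\mathrm{id}$, $AJ=-JA$, and $V(A)\perp JV(A)$.
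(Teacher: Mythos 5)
Your reduction of membership in $\mathcal Q_p$ to the two scalar conditions $g(W,AN)=0$ and $g(W,A\xi)=0$ is a correct unpacking of the set-builder definition, and part (i) closes exactly as in the paper, since there $\{N,AN,A\xi\}^{\perp}=\{N,\xi\}^{\perp}=\mathcal C_p$. The genuine gap is in part (ii): the verification you postpone to the end cannot be completed, because the space $\{N,AN,A\xi\}^{\perp}$ your criterion produces is strictly larger than $\mathcal C_p\ominus\mathbb{C}(JX+Y)$. Concretely, with $N=\cos t\,X+\sin t\,JY$ the vector $W_0=-\sin t\,JX+\cos t\,Y$ satisfies $g(W_0,N)=g(W_0,AN)=g(W_0,A\xi)=0$, so it passes your test, yet $g(W_0,\xi)=\sin 2t\neq 0$ on $(0,\pi/4]$, so $W_0\notin\mathcal C_p$ (at $t=\pi/4$ one has $W_0=\xi$ itself). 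Since $N,AN,A\xi$ are linearly independent here, your candidate has dimension $2m-3$, whereas $\mathcal C_p\ominus\mathbb{C}(JX+Y)=(\mathbb{C}X\oplus\mathbb{C}Y)^{\perp}$ has dimension $2m-4$. The ``$t$-dependent bookkeeping'' you flag as the main obstacle is therefore not delicate --- along your route the asserted identity simply fails.

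The missing idea is that $\mathcal Q_p$ has to be treated as the \emph{maximal $\mathfrak A$-invariant subspace} of $T_pM$ (as the surrounding text intends), not as the literal set $\{W\in T_pM: AW\in T_pM \text{ for all } A\in\mathfrak A_p\}$; the latter set is not itself $\mathfrak A$-invariant. Indeed, an $\mathfrak A$-invariant subspace is automatically $J$-invariant, because products of two conjugations in $\mathfrak A_p$ realize every rotation $\cos\theta\,\mathrm{id}+\sin\theta\,J$; hence such a subspace contained in $T_pM$ must be orthogonal to all of $N$, $JN$, $AN$, $JAN$, i.e. to $\mathbb{C}X\oplus\mathbb{C}Y$, and conversely $(\mathbb{C}X\oplus\mathbb{C}Y)^{\perp}$ is a complex, $A$-invariant subspace of $T_pM$. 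This is precisely how the paper argues (invariance of $\mathbb{C}X\oplus\mathbb{C}Y$ under the whole circle $\mathfrak A_p$, then passage to the orthogonal complement), and it is what excludes your extra direction: although $AW_0$ is tangent to $M$, $JW_0=\sin t\,X+\cos t\,JY$ satisfies $g(JW_0,N)=\sin 2t\neq 0$, so $W_0$ lies in no $J$-invariant subspace of $T_pM$. If you rerun your harmonic-elimination argument for the maximal invariant subspace --- adding the conditions coming from $JW$ as well as $W$ --- you land on $(\mathbb{C}X\oplus\mathbb{C}Y)^{\perp}$ and the lemma follows.
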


\begin{proof}
First assume that $N_p$ is ${\mathfrak A}$-principal. Then there exists a conjugation $A \in {\mathfrak A}$ such that $N_p \in V(A)$, that is, $AN_{p} = N_{p}$. Then we have $A\xi_{p} = -AJN_{p} = JAN_{p} = JN_{p} = -\xi_{p}$. It follows that $A$ restricted to ${\mathbb C}N_{p}$ is the orthogonal reflection in the line ${\mathbb R}N_{p}$. Since all conjugations in ${\mathfrak A}$ differ just by a rotation on such planes we see that ${\mathbb C}N_{p}$ is invariant under ${\mathfrak A}$. This implies that ${\mathcal C}_{p} = T_{p}{Q^ * }^{m} \ominus {\mathbb C}N_{p}$ is invariant under ${\mathfrak A}$, and hence ${\mathcal Q}_{p} = {\mathcal C}_{p}$.

Now assume that $N_p$ is not ${\mathfrak A}$-principal. Then there exist a conjugation $A \in {\mathfrak A}$ and orthonormal vectors $X,Y \in V(A)$ such that $N_{p} = \cos(t)X + \sin(t)JY$ for some $t \in (0,\pi/4]$. The conjugation $A$ restricted to ${\mathbb C}X \oplus {\mathbb C}Y$ is just the orthogonal reflection in ${\mathbb R}X \oplus {\mathbb R}Y$. Again, since all conjugations in ${\mathfrak A}$ differ just by a rotation on such invariant spaces we see that ${\mathbb C}X \oplus {\mathbb C}Y$ is invariant under ${\mathfrak A}$. This implies that ${\mathcal Q}_{p} = T_{p}Q^{*m} \ominus ({\mathbb C}X \oplus {\mathbb C}Y) = {\mathcal C}_{p} \ominus {\mathbb C}(JX+Y)$ is invariant under ${\mathfrak A}$, and hence ${\mathcal Q}_{p} = {\mathcal C}_{p} \ominus {\mathbb C}(JX+Y)$.
\end{proof}

\vskip 6pt

We see from the previous lemma that the rank of the distribution ${\mathcal Q}$ is in general not constant on~$M$. However, if $N_p$ is not ${\mathfrak A}$-principal, then $N$ is not ${\mathfrak A}$-principal in an open neighborhood of $p \in M$, and ${\mathcal Q}$ defines a regular distribution in an open neighborhood of~$p$.

\medskip
We are interested in real hypersurfaces for which both ${\mathcal C}$ and ${\mathcal Q}$ are invariant under the shape operator~$S$ of $M$. Real hypersurfaces in a K\"{a}hler manifold for which the maximal complex subbundle is invariant under the shape operator are known as Hopf hypersurfaces. This condition is equivalent to the Reeb flow on $M$, that is, the flow of the structure vector field $\xi$, to be geodesic.
We assume now that $M$ is a Hopf hypersurface. Then the shape operator $S$ of $M$ in $\HQ$ satisfies
\[
S\xi = \alpha \xi
\]
for the Reeb vector field $\xi$ and the smooth function $\alpha = g(S\xi,\xi)$ on $M$. Then we now consider the Codazzi equation
\begin{equation*}
\begin{split}
g((\nabla_XS)Y - (\nabla_YS)X,Z) & =  -\eta(X)g(\phi Y,Z) + \eta(Y) g(\phi X,Z) + 2\eta(Z) g(\phi X,Y) \\
& \quad \ \   - g(X,AN)g(AY,Z) + g(Y,AN)g(AX,Z)\\
& \quad \ \   - g(X,A\xi)g(J AY,Z) + g(Y,A\xi)g(JAX,Z).
\end{split}
\end{equation*}
Putting $Z = \xi$ we get
\begin{equation*}
\begin{split}
g((\nabla_XS)Y - (\nabla_YS)X,\xi) & =    2 g(\phi X,Y) - g(X,AN)g(Y,A\xi) + g(Y,AN)g(X,A\xi)\\
& \quad \ \  + g(X,A\xi)g(JY,A\xi) - g(Y,A\xi)g(JX,A\xi).
\end{split}
\end{equation*}
On the other hand, we have
\begin{equation}\label{eq: 3.1}
\begin{split}
& g((\nabla_XS)Y - (\nabla_YS)X,\xi) \\
& \quad =  g((\nabla_XS)\xi,Y) - g((\nabla_YS)\xi,X) \\
& \quad =  (X\alpha)\eta(Y) - (Y\alpha)\eta(X) + \alpha g((S\phi + \phi
S)X,Y) - 2g(S \phi SX,Y).
\end{split}
\end{equation}
Comparing the previous two equations and putting $X = \xi$ yields
\begin{equation}\label{e31}
Y\alpha  =  (\xi \alpha)\eta(Y)  + 2g(\xi,AN)g(Y,A\xi) -
2g(Y,AN)g(\xi,A\xi).
\end{equation}
Reinserting this into \eqref{eq: 3.1} yields
\begin{equation*}
\begin{split}
 g((\nabla_XS)Y - (\nabla_YS)X,\xi) & =   2g(\xi,AN)g(X,A\xi)\eta(Y) - 2g(X,AN)g(\xi,A\xi)\eta(Y) \\
& \quad  - 2g(\xi,AN)g(Y,A\xi)\eta(X) + 2g(Y,AN)g(\xi,A\xi)\eta(X) \\
& \quad  + \alpha g((\phi S + S\phi)X,Y) - 2g(S \phi SX,Y) .
\end{split}
\end{equation*}
Altogether this implies
\begin{equation*}
\begin{split}
0 & =  2g(S \phi SX,Y) - \alpha g((\phi S + S\phi)X,Y) + 2 g(\phi X,Y) \\
& \quad - g(X,AN)g(Y,A\xi) + g(Y,AN)g(X,A\xi)\\
& \quad + g(X,A\xi)g(JY,A\xi) - g(Y,A\xi)g(JX,A\xi)\\
& \quad - 2g(\xi,AN)g(X,A\xi)\eta(Y) + 2g(X,AN)g(\xi,A\xi)\eta(Y) \\
& \quad + 2g(\xi,AN)g(Y,A\xi)\eta(X) - 2g(Y,AN)g(\xi,A\xi)\eta(X).
\end{split}
\end{equation*}
At each point $p \in M$ we can choose $A \in {\mathfrak A}_p$ such that
\begin{equation}\label{normal vector}
N = \cos(t)Z_1 + \sin(t)JZ_2
\end{equation}
for some orthonormal vectors $Z_1$, $Z_2 \in V(A)$ and $0 \leq t \leq \frac{\pi}{4}$ (see Proposition~3 in \cite{R}). Note that $t$ is a function on $M$.
First of all, since $\xi = -JN$, we have
\begin{equation*}
\left \{
\begin{array}{l}
\xi = \sin (t)Z_2 - \cos (t)JZ_1, \\
AN  =  \cos (t)Z_1 - \sin (t)JZ_2, \\
A\xi =  \sin (t)Z_2 + \cos (t)JZ_1.
\end{array}
\right.
\end{equation*}
This implies $g(\xi,AN) = 0$ and hence
\begin{equation}\label{eq: 3.4}
\begin{split}
0 & =  2g(S \phi SX,Y) - \alpha g((\phi S + S\phi)X,Y) + 2 g(\phi X,Y) \\
& \quad  - g(X,AN)g(Y,A\xi) + g(Y,AN)g(X,A\xi)\\
& \quad  + g(X,A\xi)g(JY,A\xi) - g(Y,A\xi)g(JX,A\xi)\\
& \quad  + 2g(X,AN)g(\xi,A\xi)\eta(Y) - 2g(Y,AN)g(\xi,A\xi)\eta(X).
\end{split}
\end{equation}
We have $JA\xi = -AJ\xi = - AN$, and inserting this into \eqref{eq: 3.4} implies:
\begin{lem}\label{lemma 3.2}
Let $M$ be a Hopf real hypersurface in the complex hyperbolic quadric ${Q^*}^{m}$ with (local) unit normal vector field $N$. For each point in $p \in M$ we choose $A \in {\mathfrak A}_p$ such that
$N_p = \cos(t)Z_1 + \sin(t)JZ_2$ holds
for some orthonormal vectors $Z_1$, $Z_2 \in V(A)$ and $0 \leq t \leq \frac{\pi}{4}$. Then
\begin{equation}\label{e31-(2)}
\begin{split}
0 & =  2g(S \phi SX,Y) - \alpha g((\phi S + S\phi)X,Y) +  2g(\phi X,Y) \\
& \quad - 2g(X,AN)g(Y,A\xi) + 2g(Y,AN)g(X,A\xi) \\
& \quad - 2g(\xi,A\xi) \{g(Y,AN)\eta(X) - g(X,AN)\eta(Y) \}
\end{split}
\end{equation}
holds for all vector fields $X$ and $Y$ on $M$.
\end{lem}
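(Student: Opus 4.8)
The plan is to derive the displayed identity \eqref{e31-(2)} from equation \eqref{eq: 3.4}, which the preceding computation already supplies. Recall how \eqref{eq: 3.4} arises: one feeds $Z=\xi$ into the Codazzi equation, compares with the intrinsic expression \eqref{eq: 3.1} for $g((\nabla_X S)Y-(\nabla_Y S)X,\xi)$, sets $X=\xi$ to solve for the gradient term $Y\alpha$ in \eqref{e31}, and reinserts this to annihilate every occurrence of $X\alpha$ and $Y\alpha$; the normalized form $N=\cos(t)Z_1+\sin(t)JZ_2$ with $Z_1,Z_2\in V(A)$ then gives $g(\xi,AN)=0$, eliminating two further boundary terms. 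Granting \eqref{eq: 3.4}, the only thing left to do is to rewrite the two ``mixed'' terms on its third line, $g(X,A\xi)g(JY,A\xi)-g(Y,A\xi)g(JX,A\xi)$, in terms of $AN$.

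First I would record the pointwise identity $JA\xi=-AN$. It follows from $\xi=-JN$ (so that $J\xi=-J^2N=N$) together with the anti-commutation $AJ=-JA$ of the real structure with the complex structure, giving $JA\xi=-AJ\xi=-AN$. Since $J$ is skew-symmetric for $g$, this yields at once $g(JY,A\xi)=-g(Y,JA\xi)=g(Y,AN)$ and, symmetrically, $g(JX,A\xi)=g(X,AN)$.

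Substituting these two relations into \eqref{eq: 3.4} turns its third line into $g(X,A\xi)g(Y,AN)-g(Y,A\xi)g(X,AN)$, which has exactly the same bilinear shape as its second line $-g(X,AN)g(Y,A\xi)+g(Y,AN)g(X,A\xi)$; adding the two merely doubles these contributions to $-2g(X,AN)g(Y,A\xi)+2g(Y,AN)g(X,A\xi)$. The operator terms $2g(S\phi SX,Y)$, $-\alpha g((\phi S+S\phi)X,Y)$ and $2g(\phi X,Y)$ are left untouched, while the final line is merely regrouped by factoring out $-2g(\xi,A\xi)$; the result is precisely \eqref{e31-(2)}. I expect no genuine obstacle in this last step, which is short and essentially forced once $JA\xi=-AN$ is in hand. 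The real care lies upstream, in the sign bookkeeping through the reinsertion of \eqref{e31} and in checking that $g(\xi,AN)=0$ cancels exactly the intended pair of terms, all of which is already carried out in the passage leading to \eqref{eq: 3.4}.
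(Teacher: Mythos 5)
Your proposal is correct and follows exactly the paper's route: the paper derives \eqref{eq: 3.4} in the preceding passage and then obtains the lemma in one line by inserting $JA\xi=-AJ\xi=-AN$, which is precisely the substitution you carry out. The sign bookkeeping you describe (the third line of \eqref{eq: 3.4} doubling the second, and the last line regrouped with the factor $-2g(\xi,A\xi)$) matches the stated identity \eqref{e31-(2)}.
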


We can write for any vector field $Y$ on $M$ in $\HQ$
$$
AY=BY+{\rho}(Y)N,
$$
where $BY$ denotes the tangential component of $AY$ and ${\rho}(Y)=g(AY,N)$.

\vskip 6pt

By virtue of \eqref{e31}, we assert:
\begin{lem}\label{lemma singular}
Let $M$ be a Hopf real hypersurface in complex hyperbolic quadric ${Q^{*}}^{m}$, $m \geq 3$. If the Reeb curvature function~$\alpha = g(S\xi, \xi)$ is constant, then the normal vector field~$N$ should be singular, that is, $N$ is either $\mathfrak A$-isotropic or $\mathfrak A$-principal.
\end{lem}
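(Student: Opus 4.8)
The plan is to exploit equation~\eqref{e31} directly. Since $\alpha$ is constant we have $Y\alpha = 0$ for every tangent vector field $Y$ and $\xi\alpha = 0$; substituting these into \eqref{e31} and dividing by $2$ leaves the pointwise identity
\begin{equation*}
0 = g(\xi, AN)\,g(Y, A\xi) - g(Y, AN)\,g(\xi, A\xi)
\end{equation*}
for all $Y \in T_pM$. The first step is to evaluate the two scalar coefficients by means of the decomposition $N = \cos(t)Z_1 + \sin(t)JZ_2$ fixed at $p$. From the expressions for $\xi$, $AN$ and $A\xi$ recorded before Lemma~\ref{lemma 3.2} one reads off $g(\xi, AN) = 0$, while a short computation using $g(Z_2, JZ_1) = 0$ and $g(JZ_1, JZ_1) = 1$ gives $g(\xi, A\xi) = \sin^2 t - \cos^2 t = -\cos(2t)$. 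Hence the identity collapses to $\cos(2t)\,g(Y, AN) = 0$ for all $Y \in T_pM$.

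The second step is to extract the tangential part of $AN$, which is the only point requiring care, because $AN$ itself is generally \emph{not} tangent to $M$. Using the notation $AN = BN + \rho(N)N$ introduced above, one has $\rho(N) = g(AN, N) = \cos^2 t - \sin^2 t = \cos(2t)$, so that with the half-angle identities $1 - \cos(2t) = 2\sin^2 t$ and $1 + \cos(2t) = 2\cos^2 t$ the tangential component simplifies to
\begin{equation*}
BN = AN - \cos(2t)\,N = \sin(2t)\bigl(\sin(t)Z_1 - \cos(t)JZ_2\bigr),
\end{equation*}
a vector of norm $|\sin(2t)|$. Since $g(Y, AN) = g(Y, BN)$ for tangent $Y$, the previous identity becomes $\cos(2t)\,g(Y, BN) = 0$ for all $Y \in T_pM$.

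Finally I would argue pointwise by cases on the value $t = t(p) \in [0, \frac{\pi}{4}]$. If $t = 0$, then $N_p \in V(A)$ is $\mathfrak A$-principal and there is nothing more to prove. If $t \neq 0$, then $BN \neq 0$, so substituting $Y = BN$ forces $\cos(2t)\,|BN|^2 = 0$, whence $\cos(2t) = 0$, i.e. $t = \frac{\pi}{4}$, so that $N_p$ is $\mathfrak A$-isotropic. In either case $N_p$ is singular, which is the assertion. The main obstacle—indeed the sole subtlety—is the passage through the tangential projection $BN$: one must resist substituting $Y = AN$ into $\cos(2t)\,g(Y, AN) = 0$, which, since $g(AN, AN) = 1$, would spuriously force $\cos(2t) = 0$ everywhere and thereby miss the $\mathfrak A$-principal alternative; the correct move is to project $AN$ onto $T_pM$ first and to observe that this projection $BN$ vanishes precisely when $t = 0$.
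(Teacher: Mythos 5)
Your proof is correct and follows essentially the same route as the paper: both reduce \eqref{e31} under constant $\alpha$ to the identity $g(A\xi,\xi)\,g(Y,AN)=0$ (using $g(\xi,AN)=0$ from the decomposition $N=\cos(t)Z_1+\sin(t)JZ_2$) and then resolve the dichotomy via $g(AN,N)=\cos 2t$. The only cosmetic difference is in the $\mathfrak A$-principal branch, where the paper shows $(AN)^T=0$ forces $g(AN,N)^2=1$ by applying $A^2=I$, while you compute the tangential projection $\sin(2t)\bigl(\sin(t)Z_1-\cos(t)JZ_2\bigr)$ explicitly and test the identity against it — both correctly avoid the pitfall you flag of substituting the non-tangent vector $AN$ for $Y$.
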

\begin{proof}
Assume the Reeb curvature function~$\alpha = g(S \xi, \xi)$ is constant. From this, together with \eqref{e31} and $g(A\xi, N)=0$, it follows that
\begin{equation*}
g(A\xi, \xi)g(Y, AN) =0
\end{equation*}
for any $Y \in T_{p}M$, $p \in M$. The first part $g(A\xi, \xi)=0$ implies $N$ is $\mathfrak A$-isotropic. Now let us work on the open subset $\mathcal U=\{\, p \in M\,|\, \beta(p)=g(A\xi, \xi)(p) \neq 0 \}$. Then it follows that $g(AN, Y)=0$ for all $Y \in T_{p}M$, $p \in \mathcal U$. Then, for the orthonormal basis $\{e_{1}, e_{2}, \cdots, e_{2m-1}, e_{2m}:=N\}$ of $T_{p}Q^{*m}$, the tangent vector $AN \in T_{p}Q^{*m}$ given by
\begin{equation}\label{eq: 3.2}
\begin{split}
AN & = \sum_{i=1}^{2m} g(AN, e_{i})e_{i} \\
   & = \sum_{i=1}^{2m-1} g(AN, e_{i})e_{i} + g(AN, N)N \\
   & = g(AN, N)N.
\end{split}
\end{equation}
Taking the complex conjugate~$A$ to this equation and using $A^{2}=I$ and \eqref{eq: 3.2} again, we get
$$
N = A^{2}N = g(AN, N)AN = g(AN, N)^{2} N,
$$
which means that $N$ is $\mathfrak A$-principal. In fact,  from \eqref{normal vector}, we see that $g(AN, N) = \cot 2t$, $t \in [0, \frac{\pi}{4})$ on $\mathcal U$. So, $g(AN, N)= \pm 1$ leads to $t=0$. This completes the proof of our Lemma.
\end{proof}
\begin{re}
\rm By virtue of Lemma~\ref{lemma singular}, we assert that if the Reeb function~$\alpha$ is identically vanishing on $M$ then $N$ should be singular.
\end{re}

If $N$ is $\mathfrak A$-principal, that is, $A\xi=-\xi$ and $AN = N$, we have $\rho = 0$, because ${\rho}(Y):=g(Y,AN)=g(Y,N)=0$ for any tangent vector field $Y$ on $M$ in $\HQ$. So we have $AY=BY$ for any tangent vector field $Y$ on $M$ in $\HQ$. From this, together with Lemma~\ref{lemma 3.2}, we have proved:
\begin{lem}\label{lemma 3.3}
Let $M$ be a Hopf hypersurface in the complex hyperbolic quadric ${Q^*}^m$, $m \geq 3$. Then we have
\[
(2S \phi S - \alpha(\phi S + S\phi) + 2\phi) X =
 2\rho(X)(B\xi - \beta \xi) + 2g(X,B\xi - \beta \xi)\phi B\xi ,
\]
where the function $\beta$ is given by ${\beta}:=g({\xi},A{\xi})=-g(N,AN)$.
\end{lem}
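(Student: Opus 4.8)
The plan is to start from the pointwise identity \eqref{e31-(2)} of Lemma~\ref{lemma 3.2} and to rewrite its entire right-hand side as $g(\,\cdot\,,Y)$ of a single tangent vector, so that the non-degeneracy of $g$ lets me read off the asserted operator identity by cancelling the free field $Y$. Set $L := 2S\phi S - \alpha(\phi S + S\phi) + 2\phi$ and $\beta := g(\xi, A\xi)$. Since $g(\xi, AN)=0$ (established immediately before Lemma~\ref{lemma 3.2}), the vector $A\xi$ is tangent to $M$, so that $A\xi = B\xi$ and hence $B\xi - \beta\xi$ is precisely the component of $A\xi$ orthogonal to $\xi$. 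Moreover $\rho(X) = g(AX, N) = g(X, AN)$, because the conjugation $A$ is $g$-self-adjoint.

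First I would read the three terms $2g(S\phi SX,Y) - \alpha g((\phi S + S\phi)X,Y) + 2g(\phi X,Y)$ on the right of \eqref{e31-(2)} as $g(LX,Y)$, and move the remaining four bilinear terms to the left, regrouping them by the factors $g(X,AN)=\rho(X)$ and $g(Y,AN)=\rho(Y)$. Using $g(Y,A\xi) - \beta\eta(Y) = g(Y, A\xi - \beta\xi) = g(Y, B\xi - \beta\xi)$, and the same identity with $X$ in place of $Y$, this collapses \eqref{e31-(2)} to
\begin{equation*}
g(LX,Y) = 2\rho(X)\,g(Y, B\xi - \beta\xi) - 2\rho(Y)\,g(X, B\xi - \beta\xi).
\end{equation*}
The first summand is already $g\big(2\rho(X)(B\xi-\beta\xi),\,Y\big)$, which matches the first term of the claimed right-hand side.

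The key step is to rewrite the coefficient $\rho(Y)$ in the second summand. Here I would invoke the anti-commutation $JA=-AJ$ together with $J\xi = N$ (which follows from $\xi=-JN$ and $J^2=-I$) to obtain $JA\xi = -AJ\xi = -AN$. Since $A\xi$ is tangent, the splitting $JV = \phi V + \eta(V)N$ applied to $V = A\xi$ gives $\phi B\xi = \phi A\xi = JA\xi - \eta(A\xi)N = -AN - \beta N = -BN$, where $AN = BN - \beta N$ records the tangential and normal parts of $AN$. Therefore $g(\phi B\xi, Y) = -g(BN, Y) = -g(AN, Y) = -\rho(Y)$ for every tangent field $Y$. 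Substituting $\rho(Y) = -g(\phi B\xi, Y)$ converts the second summand into $2g(X, B\xi-\beta\xi)\,g(\phi B\xi, Y) = g\big(2g(X,B\xi-\beta\xi)\,\phi B\xi,\,Y\big)$.

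Combining the two summands gives $g(LX,Y) = g\big(2\rho(X)(B\xi-\beta\xi) + 2g(X,B\xi-\beta\xi)\phi B\xi,\,Y\big)$ for all tangent $Y$, and cancelling $Y$ yields the stated identity. The only substantive point is the identity $\phi B\xi = -BN$, equivalently the conversion $\rho(Y) = -g(\phi B\xi,Y)$; everything else is bookkeeping of the four bilinear terms in \eqref{e31-(2)}. I expect this conversion---the one place where the interplay of $J$, $A$, and $\phi$ genuinely enters---to be the main obstacle, while the regrouping is routine.
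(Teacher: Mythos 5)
Your proposal is correct and takes essentially the same route as the paper, which obtains Lemma~\ref{lemma 3.3} directly from \eqref{e31-(2)} via the tangential decomposition $AY=BY+\rho(Y)N$ together with $g(A\xi,N)=0$. Your key conversion $\rho(Y)=-g(\phi B\xi,Y)$ (equivalently $\phi B\xi=-BN$) is precisely the identity the paper uses implicitly and records just afterwards in \eqref{e33}.
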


If the unit normal vector field $N$ is ${\mathfrak A}$-principal, we can choose a real structure $A \in {\mathfrak A}$ such that $AN = N$. Then we have $\rho = 0$ and $\phi B\xi = -\phi \xi = 0$, and therefore
\begin{equation}\label{e32}
2S \phi S - \alpha(\phi S + S\phi) = -2\phi.
\end{equation}
If $N$ is not ${\mathfrak A}$-principal, we can choose a real structure $A \in {\mathfrak A}$ as in Lemma~\ref{lemma 3.1} and get
\begin{equation}\label{e33}
\begin{split}
\rho(X)&(B\xi - \beta \xi) + g(X,B\xi - \beta \xi)\phi B\xi \\
 & = -g(X,\phi(B\xi - \beta\xi))(B\xi - \beta \xi) + g(X,B\xi - \beta \xi)\phi (B\xi - \beta\xi)\\
 & = ||B\xi - \beta\xi||^2\{ g(X,U)\phi U -g(X,\phi U)U \} \\
 & =  \sin^2(2t)\{ g(X,U)\phi U -g(X,\phi U)U \},
\end{split}
\end{equation}
which is equal to $0$ on ${\mathcal Q}$ and equal to $\sin^2(2t)\phi X$ on ${\mathcal C} \ominus {\mathcal Q}$. Altogether we have proved:
\begin{lem}\label{lemma 3.4}
Let $M$ be a Hopf real hypersurface in the complex hyperbolic quadric ${Q^*}^m$, $m \geq 3$. Then the tensor field
\[ 2S\phi S - \alpha (\phi S + S\phi) \]
leaves ${\mathcal Q}$ and ${\mathcal C} \ominus {\mathcal Q}$ invariant and we have
\[ 2S\phi S - \alpha (\phi S + S\phi) = -2\phi \ {\rm on}\  {\mathcal Q} \]
and
\[ 2S\phi S - \alpha (\phi S + S\phi) = -2\beta^2\phi \ {\rm on}\  {\mathcal C} \ominus {\mathcal Q}, \]
where ${\beta}:=g(A{\xi},{\xi})=-\cos 2t$ as in section~\ref{section 3}.
\end{lem}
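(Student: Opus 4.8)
The plan is to read Lemma~\ref{lemma 3.4} off directly from Lemma~\ref{lemma 3.3}, by splitting into the two possibilities for the normal vector field and interpreting the right-hand side of Lemma~\ref{lemma 3.3} on each of the subbundles $\mathcal Q$ and $\mathcal C \ominus \mathcal Q$ separately. I would abbreviate the tensor field in question as $T := 2S\phi S - \alpha(\phi S + S\phi)$, so that Lemma~\ref{lemma 3.3} reads $(T + 2\phi)X = 2\rho(X)(B\xi - \beta\xi) + 2g(X, B\xi - \beta\xi)\phi B\xi$ for every $X$; the whole task is then to evaluate the right-hand side componentwise.

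First I would dispose of the $\mathfrak A$-principal case. Choosing $A \in \mathfrak A$ with $AN = N$ gives $\rho = 0$ and $B\xi = A\xi = -\xi$, whence $\phi B\xi = 0$ and the entire right-hand side of Lemma~\ref{lemma 3.3} vanishes; this is exactly \eqref{e32}, i.e. $T = -2\phi$. Here Lemma~\ref{lemma 3.1}(i) yields $\mathcal Q = \mathcal C$, so $\mathcal C \ominus \mathcal Q = \{0\}$ and the assertion on $\mathcal C \ominus \mathcal Q$ is vacuous; moreover $\beta = -\cos 2t = -1$, so $\beta^2 = 1$ and the two displayed identities coincide on all of $\mathcal C$.

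The substantive case is when $N$ is not $\mathfrak A$-principal, with $A$ chosen as in Lemma~\ref{lemma 3.1}(ii) and $t \in (0, \pi/4]$. Since $g(\xi, AN) = 0$ one has $B\xi = A\xi$, and because $A$ is an orthogonal real structure, $\|B\xi - \beta\xi\|^2 = \|A\xi\|^2 - 2\beta\,g(A\xi, \xi) + \beta^2 = 1 - \beta^2 = \sin^2 2t$. Setting $U := (B\xi - \beta\xi)/\sin 2t$, the computation \eqref{e33} identifies the right-hand side of Lemma~\ref{lemma 3.3} with $2\sin^2(2t)\{g(X,U)\phi U - g(X,\phi U)U\}$, which is $\sin^2(2t)\,\phi$ acting on the complex line $\mathbb C U = \mathcal C \ominus \mathcal Q$ and zero on $\mathcal Q$. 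Substituting back gives $T = -2\phi$ on $\mathcal Q$ and $TX = (2\sin^2(2t) - 2)\phi X = -2\cos^2(2t)\,\phi X = -2\beta^2\phi X$ on $\mathcal C \ominus \mathcal Q$, which are the two claimed formulas.

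It remains to record the invariance statement, which is automatic once the two formulas are in hand. By Lemma~\ref{lemma 3.1}(ii) the subbundle $\mathcal C \ominus \mathcal Q = \mathbb C(JX + Y)$ is a complex line and hence is preserved by $\phi$ (which coincides with $J$ on $\mathcal C$); since $\phi$ is a skew-symmetric $g$-isometry of $\mathcal C$, its orthogonal complement $\mathcal Q$ inside $\mathcal C$ is $\phi$-invariant as well. As $T$ equals a scalar multiple of $\phi$ on each of $\mathcal Q$ and $\mathcal C \ominus \mathcal Q$, it leaves both subbundles invariant. The only delicate points are the bookkeeping identity $\|B\xi - \beta\xi\|^2 = \sin^2 2t$ and the recognition of $g(X,U)\phi U - g(X,\phi U)U$ as $\phi$ restricted to $\mathbb C U$; all the heavier work — the Codazzi reduction behind Lemmas~\ref{lemma 3.2} and \ref{lemma 3.3} together with the identity \eqref{e33} — has already been carried out, so no genuine obstacle remains.
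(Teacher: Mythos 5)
Your proposal is correct and follows essentially the same route as the paper, which obtains Lemma~\ref{lemma 3.4} directly from Lemma~\ref{lemma 3.3} by the same case split ($\mathfrak A$-principal versus not) and the computation \eqref{e33}, reading off $0$ on $\mathcal Q$ and $\sin^2(2t)\phi$ on $\mathcal C\ominus\mathcal Q$ and then converting $2\sin^2(2t)-2=-2\cos^2(2t)=-2\beta^2$. The extra details you supply (the norm computation $\|B\xi-\beta\xi\|^2=\sin^2 2t$, the identification of $g(X,U)\phi U-g(X,\phi U)U$ with $\phi$ on the complex line $\mathbb C U$, and the invariance remark) are exactly the steps the paper leaves implicit.
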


\vskip 6pt

As the tangential part of \eqref{RCTHQ}, the curvature tensor $R$ of $M$ in complex quadric $\HQ$ is defined as follows. For any $A \in {\mathfrak A}_{p_{0}}$
\begin{equation*}
\begin{split}
R(X,Y)Z & =  -g(Y,Z)X + g(X,Z)Y - g({\phi}Y,Z){\phi}X + g({\phi}X,Z){\phi}Y + 2g({\phi}X,Y){\phi}Z \\
 & \quad  - g(AY,Z)(AX)^T + g(AX,Z)(AY)^T - g(JAY,Z)(JAX)^T\\
 & \quad + g(JAX,Z)(JAY)^T + g(SY,Z)SX-g(SX,Z)SY,
\end{split}
\end{equation*}
where $(AX)^T$ and $S$ denote the tangential component of the vector field $AX$ and the shape operator of~$M$ in $\HQ$, respectively.

\vskip 6pt

\noindent From this, putting $Y=Z={\xi}$ and using $g(A{\xi},N)=0$, the structure Jacobi operator is defined by
\begin{equation*}
\begin{split}
R_{\xi}(X)&=R(X,{\xi}){\xi}\\
&= -X+{\eta}(X){\xi}-g(A{\xi},{\xi})(AX)^T+g(AX,{\xi})A{\xi}\\
& \quad \ \  +g(X,AN)(AN)^T+g(S{\xi},{\xi})SX-g(SX,{\xi})S{\xi}.
\end{split}
\end{equation*}
Then we may put the following
$$
(AY)^T=AY-g(AY,N)N.
$$

Now let us denote by ${\nabla}$ and ${\bar{\nabla}}$ the covariant derivative of $M$ and the covariant derivative of $\HQ$, respectively.
Then by using the Gauss and Weingarten formulas we can assert the following:
\begin{lem}\label{Lemma 3.5}
Let $M$ be a real hypersurface in the complex quadric $\HQ$. Then
\begin{equation}\label{e34}
\begin{split}
{\nabla}_X(AY)^T& =q(X)JAY+A{\nabla}_XY+g(SX,Y)AN\\
& \quad -g(\{q(X)JAY+A{\nabla}_XY+g(SX,Y)AN\},N)N\\
& \quad +g(AY,SX)N+g(AY,N)SX-g(SX,AY)N,
\end{split}
\end{equation}
where $q$ denote a certain $1$-form defined on $T_{p}{\HQ}$, $p \in {\HQ}$, satisfying $({\bar\nabla}_U A)V=q(U)JAV$ for any vector fields $U$, $V \in T_{p}\HQ$.
\end{lem}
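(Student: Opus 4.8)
The plan is to extract the claimed formula from the two structural facts at hand: the parallelism of the rank-two bundle $\mathfrak A$, encoded as $(\bar\nabla_U A)V = q(U)JAV$, and the Gauss--Weingarten equations $\bar\nabla_X Y = \nabla_X Y + g(SX,Y)N$ and $\bar\nabla_X N = -SX$. The strategy is to compute the ambient derivative $\bar\nabla_X(AY)$ in one clean step using the parallelism of $\mathfrak A$, and then to recover the \emph{tangential} derivative $\nabla_X(AY)^T$ by decomposing $AY$ into its tangential and normal components and applying Gauss--Weingarten to each piece separately.

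First I would use that $A$ is a section of the parallel bundle $\mathfrak A$ to write
\[
\bar\nabla_X(AY) = A(\bar\nabla_X Y) + q(X)JAY .
\]
Substituting the Gauss formula $\bar\nabla_X Y = \nabla_X Y + g(SX,Y)N$ and using the $\mathbb R$-linearity of $A$ gives
\[
\bar\nabla_X(AY) = q(X)JAY + A\nabla_X Y + g(SX,Y)AN =: W,
\]
which is precisely the bracketed expression appearing on the first line of \eqref{e34}. This already reproduces the first line of the right-hand side.

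Next I would decompose $AY = (AY)^T + g(AY,N)N$ and differentiate each summand. Since $(AY)^T$ is tangent to $M$, the Gauss formula yields $\bar\nabla_X(AY)^T = \nabla_X(AY)^T + g(SX,(AY)^T)N$, and because $g(SX,N)=0$ we have $g(SX,(AY)^T) = g(SX,AY)$. For the normal part, the Weingarten formula gives $\bar\nabla_X\big(g(AY,N)N\big) = \big(Xg(AY,N)\big)N - g(AY,N)SX$. Comparing the sum of these two with the expression $W$ found above and solving for the tangential piece, I obtain
\[
\nabla_X(AY)^T = W - g(W,N)N + g(AY,N)SX .
\]
The $N$-components are automatically consistent: differentiating $g(AY,N)$ directly shows $g(W,N) = Xg(AY,N) + g(AY,SX)$, so no scalar has to be computed by hand.

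Finally, it only remains to observe that the two extra summands $g(AY,SX)N$ and $-g(SX,AY)N$ on the last line of \eqref{e34} cancel by symmetry of $g$, so they may be carried along harmlessly and serve merely to exhibit the Gauss-formula bookkeeping; with them the displayed identity and the formula above coincide. I do not expect a genuine obstacle here, as the argument is a careful separation of tangential and normal parts. The only point requiring attention is that $AY$ is \emph{not} tangent to $M$, so one must correctly attribute the $-g(AY,N)SX$ and the $N$-terms to the Weingarten formula applied to the normal component $g(AY,N)N$ rather than treating $AY$ as a vector field on $M$.
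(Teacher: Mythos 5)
Your proposal is correct and follows essentially the same route as the paper: both arguments combine the parallelism relation $(\bar\nabla_X A)Y=q(X)JAY$ with the Gauss formula $\bar\nabla_XY=\nabla_XY+g(SX,Y)N$ and the Weingarten formula $\bar\nabla_XN=-SX$, splitting $AY$ into $(AY)^T+g(AY,N)N$ and matching tangential and normal components (the paper expands $\bar\nabla_X\{AY-g(AY,N)N\}$ directly, which is the same bookkeeping in a different order). Your observation that the last two summands $g(AY,SX)N$ and $-g(SX,AY)N$ in \eqref{e34} cancel is also accurate; they are an artifact of the paper recording the second-fundamental-form and product-rule contributions separately.
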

\begin{proof}
First let us use the Gauss formula to $(AY)^T=AY-g(AY,N)N$. Then it follows that
\begin{equation*}
\begin{split}
{\nabla}_X(AY)^T&={\bar{\nabla}}_X(AY)^T-{\sigma}(X,(AY)^T)\\
&={\bar{\nabla}}_X\{AY-g(AY,N)N\}-g(SX,(AY)^T)N\\
&=({\bar\nabla}_XA)Y+A{\bar\nabla}_XY - g(({\bar{\nabla}}_XA)Y+A{\BN}_XY,N)N - g(AY,{\BN}_XN)N\\
& \quad \ \ -g(AY,N){\BN}_XN-g(SX,(AY)^T)N,
\end{split}
\end{equation*}
where $\sigma$ denotes the second fundamental form and $N$ the unit normal vector field on $M$ in $\HQ$.
Then from this, if we use Weingarten formula ${\bar{\nabla}}_XN=-SX$, then we get the above formula.
\end{proof}

By putting $Y={\xi}$ and using $g(A{\xi},N)=0$, we have
\begin{equation}\label{e35}
\begin{split}
{\N}_X(A{\xi})&=q(X)JA{\xi}+A{\phi}SX+{\alpha}{\eta}(X)AN\\
& \quad \ \ -\{q(X)g(JA{\xi},N)+g(A{\phi}SX,N)+{\alpha}{\eta}(X)g(AN,N)\}N.
\end{split}
\end{equation}
Moreover, let us use also Gauss and Weingarten formula to $(AN)^T=AN-g(AN,N)N$. Then it follows that
\begin{equation}\label{e36}
\begin{split}
{\N}_X(AN)^T&={\BN}_X(AN)^T-{\sigma}(X,(AN)^T)\\
&={\BN}_X\{AN-g(AN,N)N\}-{\sigma}(X,(AN)^T)\\
&=({\BN}_XA)N+A{\BN}_XN-g(({\BN}_XA)N+A{\BN}_XN,N)\\
& \quad \ \ -g(AN,{\BN}_XN)N-g(AN,N){\BN}_XN-{\sigma}(X,(AN)^T)\\
&=q(X)JAN-ASX-g(q(X)JAN-ASX,N)N+g(AN,N)SX.
\end{split}
\end{equation}

On the other hand, we know that
\begin{equation}\label{e37}
\begin{split}
X{\beta}&=X(g(A{\xi},{\xi}))\\
&=g(({\BN}_XA){\xi}+A{\BN}_X{\xi},{\xi})+g(A{\xi},{\BN}_X{\xi})\\
&=g(q(X)JA{\xi}+A{\phi}SX+g(SX,{\xi})AN,{\xi})+g(A{\xi},{\phi}SX+g(SX,{\xi})N)\\
&=2g(A{\phi}SX,{\xi}).
\end{split}
\end{equation}

\vskip 10pt

\section{Some Key Lemmas}\label{section 4}

We will now apply some results in section~\ref{section 3} to get more information on Hopf hypersurfaces for which the normal vector field is ${\mathfrak A}$-principal everywhere.
\begin{lem}\label{lemma 4.1}
Let $M$ be a Hopf hypersurface in the complex hyperbolic quadric $\HQ$, $m \geq 3$, with ${\mathfrak A}$-principal normal vector field everywhere. Then the following statements hold:
\begin{enumerate}[\rm (i)]
\item {The Reeb curvature function $\alpha$ is constant.}
\item {If $X \in {\mathcal C}$ is a principal curvature
vector of $M$ with principal curvature $\lambda$, then ${\alpha}=\pm 2 $, ${\lambda}= \pm 1 $ for ${\alpha}=2{\lambda}$ or $\phi X$ is a principal curvature vector with
principal curvature ${\mu}=\frac{\alpha\lambda - 2}{2{\lambda}-{\alpha}}$ for ${\alpha} \neq 2 {\lambda}$.}
\end{enumerate}
\end{lem}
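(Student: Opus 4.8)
The plan is to prove the two assertions by different means: (ii) is an algebraic consequence of \eqref{e32}, while (i) combines the Codazzi identity \eqref{e31} with a rigidity forced by the $\mathfrak A$-principal hypothesis.

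For (ii) I would feed a principal curvature vector into \eqref{e32}. If $X\in\mathcal C$ satisfies $SX=\lambda X$, then $\phi SX=\lambda\phi X$ and $S\phi SX=\lambda S\phi X$, so \eqref{e32} applied to $X$ becomes $(2\lambda-\alpha)S\phi X=(\alpha\lambda-2)\phi X$. When $\alpha\neq2\lambda$ this exhibits $\phi X$ as a principal curvature vector with principal curvature $\mu=\tfrac{\alpha\lambda-2}{2\lambda-\alpha}$. When $\alpha=2\lambda$, the right-hand side must vanish; since $\phi X\neq0$ for $X\in\mathcal C$ this gives $\alpha\lambda=2$, and together with $\alpha=2\lambda$ it yields $\lambda^2=1$, hence $\lambda=\pm1$ and $\alpha=\pm2$.

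For (i) I would first read off $\operatorname{grad}\alpha$ from \eqref{e31}. Choosing $A\in\mathfrak A$ with $AN=N$ gives $A\xi=-\xi$ and $g(\xi,AN)=g(Y,AN)=0$ for every tangent $Y$, so \eqref{e31} collapses to $Y\alpha=(\xi\alpha)\eta(Y)$, i.e. $\operatorname{grad}\alpha=(\xi\alpha)\xi$; it then suffices to show $\xi\alpha=0$. The rigidity I would exploit is that $S$ maps $\mathcal C$ into $V(A)$. Indeed, since $AN=N$ holds throughout, $A\xi=-\xi$ is an identity of vector fields, whence $\nabla_X(A\xi)=-\nabla_X\xi=-\phi SX$; comparing with the tangential part of \eqref{e35}, which reduces to $A\phi SX$ once the normal terms $JA\xi=-N$ and $AN=N$ are cancelled against the correction term, I get $A\phi SX=-\phi SX$. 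Because $\phi=J$ on $\mathcal C$ and $A$ anticommutes with $J$, this reads $\phi(ASX)=\phi(SX)$, i.e. $SX\in V(A)$ for all $X\in\mathcal C$. As $S$ is self-adjoint and preserves $\mathcal C$, every $W\in JV(A)\cap\mathcal C$ then satisfies $SW\perp\mathcal C$ and hence $SW=0$; since $\dim(JV(A)\cap\mathcal C)=m-1$, the eigenvalue $0$ of $S$ on $\mathcal C$ has multiplicity at least $m-1$.

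To conclude I would argue on $\mathcal U'=\{\xi\alpha\neq0\}$ using symmetry of the Hessian. With $\gamma=\xi\alpha$ and $\nabla_X\xi=\phi SX$ one computes $\operatorname{Hess}\alpha(X,Y)=(X\gamma)\eta(Y)+\gamma\,g(\phi SX,Y)$; taking $X,Y\in\mathcal C$ and using symmetry gives $\gamma\,g((\phi S+S\phi)X,Y)=0$, so $\phi S+S\phi=0$ on $\mathcal C$ wherever $\gamma\neq0$. Substituting this into \eqref{e32} forces $S\phi S=-\phi$ and then $S^2=I$ on $\mathcal C$, so $S$ has no zero eigenvalue on $\mathcal C$ there, contradicting the kernel of multiplicity $m-1$ found above, since $m\geq3$. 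Hence $\mathcal U'=\emptyset$, so $\xi\alpha\equiv0$ and $\alpha$ is constant. I expect the delicate point to be the normal-component bookkeeping that reduces \eqref{e35} to $A\phi SX=-\phi SX$, and the crux of the argument to be recognising that this identity (a forced zero eigenvalue on $\mathcal C$) is precisely incompatible with the $S^2=I$ coming from Hessian symmetry.
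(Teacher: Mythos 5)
Your proof is correct, and for part (ii) it is exactly the paper's argument (read off $(2\lambda-\alpha)S\phi X=(\alpha\lambda-2)\phi X$ from Lemma~\ref{lemma 3.2} and split into the two cases). For part (i), however, you take a genuinely different and considerably shorter route to the contradiction. Both you and the paper begin identically: $Y\alpha=(\xi\alpha)\eta(Y)$, symmetry of $\mathrm{Hess}^M\alpha$ forcing $S\phi+\phi S=0$ on the set where $\xi\alpha\neq 0$, and then $S^2=I$ on $\mathcal C$ from \eqref{e32}. At that point the paper launches a long covariant-derivative computation (differentiating $S^2X=X+(\alpha^2-1)\eta(X)\xi$, combining with the Codazzi equation through \eqref{e44}--\eqref{e414}) to extract $S\phi X=0$ and hence $X=\eta(X)\xi$, contradicting $m\geq 3$. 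You instead observe that differentiating $A\xi=-\xi$ yields $A\phi SX=-\phi SX$ (the paper's \eqref{e48}) and hence $ASX=SX$ for $X\in\mathcal C$ --- which is precisely the paper's Lemma~\ref{lemma 4.2}(ii), stated only \emph{after} Lemma~\ref{lemma 4.1} and never used in its proof --- so that $S(\mathcal C)\subseteq V(A)$ and $S\vert_{\mathcal C}$ has kernel of dimension at least $m-1\geq 2$; this is flatly incompatible with $S^2=I$ on $\mathcal C$. Your normal-component bookkeeping in \eqref{e35} is right (with $AN=N$ one has $JA\xi=-N$, $g(A\phi SX,N)=0$, and all normal terms cancel), and $A\phi SX\in\mathcal Q=\mathcal C$ is tangent by Lemma~\ref{lemma 3.1}(i), so the identification is legitimate. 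What your approach buys is a structural explanation of the contradiction (a forced $(m-1)$-dimensional kernel versus invertibility of $S$ on $\mathcal C$) in place of the paper's page of tensor manipulation, plus the bonus that $ASX=SX$ is established everywhere, independently of the Hessian argument; what it costs is essentially nothing, since the ingredient $A\phi SX=-\phi SX$ is one the paper derives anyway.
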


\begin{proof}
Let $A \in {\mathfrak A}$ such that $AN = N$. Then we also have $A\xi = -\xi$. In this situation we get
\begin{equation}\label{e41}
Y\alpha  =  (\xi \alpha)\eta(Y).
\end{equation}
Since ${\rm grad}^M \alpha = (\xi \alpha) \xi$, we can compute the Hessian ${\rm Hess}^M \alpha$ by
\[
({\rm Hess}^M \alpha)(X,Y) = g(\nabla_X{\rm grad}^M \alpha, Y)
= X(\xi \alpha)\eta(Y) + (\xi \alpha) g(\phi SX,Y).
\]
As ${\rm Hess}^M \alpha$ is a symmetric bilinear form, the previous equation implies
\[
(\xi\alpha)g((S\phi + \phi S)X,Y) = 0
\]
for all vector fields $X$, $Y$ on $M$ which are tangent to the distribution ${\mathcal C}$.

\vskip 6pt

Now let us consider an open subset ${\mathcal U}=\{p \in M  \vert \, ({\xi}{\alpha})_p \neq 0\}$. Then $(S{\phi}+{\phi}S)=0$ on $\mathcal U$. Now hereafter let us continue our discussion on this open subset $\mathcal U$.
Since $AN=N$ and $A{\xi}=-{\xi}$, Lemma~\ref{lemma 3.2} and the condition $(S{\phi}+{\phi}S)=0$  imply
\begin{equation}\label{e42}
S^2{\phi}X-{\phi}X=0.
\end{equation}
From this, replacing $X$ by ${\phi}X$, it follows that
\begin{equation}\label{e43}
S^2X=X+({\alpha}^2-1){\eta}(X){\xi}.
\end{equation}
Then differentiating (\ref{e43}) and using $X{\alpha}=({\xi}{\alpha}){\eta}(X)$ give
\begin{equation}\label{e44}
\begin{split}
& ({\nabla}_XS)SY+S({\nabla}_XS)Y \\
&\quad =2{\alpha}(X{\alpha}){\eta}(Y){\xi}+({\alpha}^2-1)\{g({\nabla}_X{\xi},Y){\xi}+{\eta}(Y){\nabla}_X{\xi}\}\\
&\quad =2{\alpha}({\xi}{\alpha}){\eta}(X){\eta}(Y){\xi}+({\alpha}^2-1)\{g({\phi}SX,Y){\xi}+{\eta}(Y){\phi}SX\}.
\end{split}
\end{equation}
From this, taking skew-symmetric part and using the anti-commuting shape operator on $\mathcal U$, we have
\begin{equation}\label{e45}
\begin{split}
({\nabla}_XS)SY-({\nabla}_YS)SX+&S(({\nabla}_XS)Y-({\nabla}_YS)X)\\
&=({\alpha}^2-1)\{{\eta}(Y){\phi}SX-{\eta}(X){\phi}SY\}.
\end{split}
\end{equation}

On the other hand, the Codazzi equation in section 3 for the $\mathfrak A$-principal unit normal vector field $N$ becomes
\begin{equation}\label{e46}
\begin{split}
({\nabla}_XS)Y-({\nabla}_YS)X=&-{\eta}(X){\phi}Y+{\eta}(Y){\phi}X+2g({\phi}X,Y){\xi}\\
&+{\eta}(X){\phi}AY-{\eta}(Y){\phi}AX,
\end{split}
\end{equation}
where we have used the tangential part of $JAY={\phi}AY+{\eta}(AY)N$ for any tangent vector field $Y$ on $M$ in $\HQ$.
From this, by applying the shape operator, we can write as follows:
\begin{equation}\label{e47}
\begin{split}
S(({\nabla}_XS)Y-({\nabla}_YS)X)=&-{\eta}(X)S{\phi}Y+{\eta}(Y)S{\phi}X + 2{\alpha}g({\phi}X,Y){\xi}\\
&+{\eta}(X)S{\phi}AY-{\eta}(Y)S{\phi}AX.
\end{split}
\end{equation}
Moreover, if we differentiate $A{\xi}=-{\xi}$ from the $\mathfrak A$-principal and use the equation of Gauss, we have
\begin{equation}\label{e48}
A{\phi}SX=-{\phi}SX \quad \text{and}\quad S{\phi}AX=-S{\phi}X,
\end{equation}
where the latter formula can be obtained by the first formula and the inner product
$$g(S{\phi}AX,Z)=-g(X,A{\phi}SZ)=g(X,{\phi}SZ)=-g(S{\phi}X,Z)$$
for any tangent vector fields $X$ and $Z$ on $M$.

Substituting (\ref{e47}) into (\ref{e45}) and using (\ref{e48}) in the obtained equation, we have
\begin{equation}\label{e49}
\begin{split}
({\nabla}_XS)SY-({\nabla}_YS)SX & =({\alpha}^2-1)\big \{{\eta}(Y){\phi}SX-{\eta}(X){\phi}SY \big \}\\
& \quad \ +{\eta}(X)S{\phi}Y-{\eta}(Y)S{\phi}X-2{\alpha}g({\phi}X,Y){\xi}\\
& \quad \ -{\eta}(X)S{\phi}AY+{\eta}(Y)S{\phi}AX\\
& = ({\alpha}^2+1)\{{\eta}(Y){\phi}SX-{\eta}(X){\phi}SY\}-2{\alpha}g({\phi}X,Y){\xi}.
\end{split}
\end{equation}
Now replacing $X$ by $Z$ in (\ref{e49}) gives
\begin{equation}\label{e410}
\begin{split}
({\nabla}_ZS)SY-({\nabla}_YS)SZ & = ({\alpha}^2+1)\big \{{\eta}(Y){\phi}SZ-{\eta}(Z){\phi}SY \big \}-2{\alpha}g({\phi}Z,Y){\xi}.
\end{split}
\end{equation}
From this, by taking the inner product with $X$, we have
\begin{equation*}
\begin{split}
g(SY,({\nabla}_ZS)X)-g(SZ,({\nabla}_YS)X)& =({\alpha}^2+1)\big\{{\eta}(Y)g({\phi}SZ,X)-{\eta}(Z)g({\phi}SY,X)\big\}\\
& \quad -2{\alpha}g({\phi}Z,Y){\eta}(X).
\end{split}
\end{equation*}
Here let us use the equation of Codazzi~(\ref{e46}) for the first and the second terms in the left side of the above equation. Then it follows that
\begin{equation}\label{e411}
\begin{split}
& g(SY, ({\nabla}_XS)Z)-g(SZ,({\nabla}_XS)Y) \\
& \ \ ={\eta}(Z)g(SY,{\phi}X)-{\eta}(X)g(SY,{\phi}Z)-2{\alpha}g({\phi}Z,X){\eta}(Y)\\
& \quad \ \ -{\eta}(Z)g(SY,{\phi}AX)+{\eta}(X)g(SY,{\phi}AZ) -{\eta}(Y)g(SZ,{\phi}X)\\
& \quad \ \ +{\eta}(X)g(SZ,{\phi}Y)+2{\alpha}g({\phi}Y,X){\eta}(Z)\\
& \quad \ \ +{\eta}(Y)g({\phi}AX,SZ)-{\eta}(X)g({\phi}AY,SZ)-2{\alpha}g({\phi}Z,Y){\eta}(X)\\
& \quad \ \ +({\alpha}^2+1)\big\{{\eta}(Y)g({\phi}SZ,X)-{\eta}(Z)g({\phi}SY,X)\big\}.
\end{split}
\end{equation}
Then by using the formulas in (\ref{e48}) from ${\mathfrak A}$-principal unit normal vector field $N$ and the anti-commuting property $S{\phi}+{\phi}S=0$ on the open subset $\mathcal U$ , the equation (\ref{e410}) can be reformed as follows:
\begin{equation}\label{e412}
\begin{split}
& g(SY,({\nabla}_XS)Z)-g(SZ,({\nabla}_XS)Y)\\
& \ \ =({\alpha}^2+3)\{{\eta}(Z)g(S{\phi}X,Y)-{\eta}(Y)g(S{\phi}X,Z)\}\\
& \quad \ \  +2{\alpha}{\eta}(Y)g({\phi}X,Z)-2{\alpha}{\eta}(Z)g({\phi}X,Y)+2{\alpha}g({\phi}Y,Z){\eta}(X).
\end{split}
\end{equation}
Then the equation (\ref{e412}) can be written as follows:
\begin{equation}\label{e413}
\begin{split}
({\nabla}_XS)SY-S({\nabla}_XS)Y&=({\alpha}^2+3)\big\{g(S{\phi}X,Y){\xi}-{\eta}(Y)S{\phi}X\big\}\\
&\quad +2{\alpha}{\eta}(Y){\phi}X-2{\alpha}g({\phi}X,Y){\xi}+2{\alpha}{\eta}(X){\phi}Y.
\end{split}
\end{equation}
Finally summing up (\ref{e44}) and (\ref{e413}) gives
\begin{equation}\label{e414}
\begin{split}
({\nabla}_XS)SY& =2g(S{\phi}X,Y){\xi}+{\alpha}({\xi}{\alpha}){\eta}(X){\eta}(Y){\xi}\\
& \quad +({\alpha}^2+1){\eta}(Y){\phi}SX+{\alpha}{\eta}(Y){\phi}X-{\alpha}g({\phi}X,Y){\xi}+{\alpha}{\eta}(X){\phi}Y.
\end{split}
\end{equation}
Then by taking the inner product of (\ref{e414}) with the Reeb vector field $\xi$ and using (\ref{e41}) and the formula
$({\nabla}_XS){\xi}=(X{\alpha}){\xi}+{\alpha}{\phi}SX-S{\phi}SX$, we have
$$
S{\phi}X=0
$$
for any tangent vector field $X$ on $M$ in $\HQ$. This gives that $SX={\alpha}{\eta}(X){\xi}$. From this, applying the shape operator $S$ and using (\ref{e43}) imply
$$
S^2X={\alpha}^2{\eta}(X){\xi}=X+({\alpha}^2-1){\eta}(X){\xi},
$$
which gives $X={\eta}(X){\xi}$. This gives a contradiction, because we have assumed $m \geq 3$. So the open subset ${\mathcal U}=\{p \in M \, \vert \, ({\xi}{\alpha})_p \neq 0\}$ of $M$ is empty. This implies
${\xi}{\alpha}=0$ on $M$ by the continuity of the the Reeb curvature function $\alpha$. Then from (\ref{e41}) it follows that $X{\alpha}=({\xi}{\alpha}){\eta}(X)=0$. So the Reeb curvature function $\alpha$ is constant on~$M$.

\vskip 6pt

The remaining part of the lemma follows
easily from the equation
 \[(2{\lambda}-{\alpha})S{\phi}X=({\alpha}{\lambda}-2){\phi}X. \]
of Lemma~\ref{lemma 3.2}.
\end{proof}


Now we want to give a new lemma which will be useful to prove our main theorem
as follows:
\begin{lem}\label{lemma 4.2}
Let $M$ be a Hopf real hypersurface in the complex hyperbolic quadric $\HQ$, $m \geq 3$, such that the normal vector field $N$ is ${\mathfrak A}$-principal everywhere. Then we have the following:
\begin{itemize}
\item[(i)] ${\bar{\nabla}}_XA=0$ for any $X \in {\mathcal C}$.
\item[(ii)] $ASX=SX$ for any $X \in \mathcal C$.
\end{itemize}
\end{lem}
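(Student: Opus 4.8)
The plan is to exploit the defining relation of the parallel bundle $\mathfrak A$, namely $\bar{\nabla}_X A = q(X)\,JA$ for the $1$-form $q$. Since $JA$ never vanishes, statement (i) is \emph{equivalent} to proving that $q(X)=0$ for every $X \in \mathcal C$. The key observation is that both conclusions follow at once from computing $q$ explicitly, and that $q$ is pinned down simply by differentiating the algebraic identities that characterize $\mathfrak A$-principality. Since $N$ is $\mathfrak A$-principal everywhere, we may choose a smooth real structure $A \in \mathfrak A$ with $AN=N$ on all of $M$; this forces $A\xi = -A J N = J A N = JN = -\xi$ as well.

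First I would differentiate $AN=N$ using the already-established formula \eqref{e36} for $\nabla_X(AN)^T$. Because $AN=N$ gives $(AN)^T = N - g(AN,N)N = 0$ and $g(AN,N)=1$, the left-hand side of \eqref{e36} vanishes identically. On the right-hand side one uses $JAN = JN = -\xi$ together with $g(ASX,N)=g(SX,AN)=g(SX,N)=0$ (as $SX$ is tangent to $M$) to kill the normal correction term, leaving the clean identity
\[
ASX = SX - q(X)\,\xi .
\]
To isolate $q$, I would pair this with $\xi$: using that $A$ is self-adjoint and $A\xi=-\xi$ gives $g(ASX,\xi)=g(SX,A\xi)=-g(SX,\xi)=-\alpha\eta(X)$ by the Hopf condition, while the right-hand side yields $g(SX,\xi)-q(X)=\alpha\eta(X)-q(X)$. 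Comparing the two produces $q(X)=2\alpha\eta(X)$.

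Restricting now to $X \in \mathcal C$, where $\eta(X)=0$, we obtain $q(X)=0$, hence $\bar{\nabla}_X A = q(X)\,JA = 0$, which is (i); feeding $q(X)=0$ back into the displayed identity gives $ASX=SX$, which is (ii). The argument has no deep obstacle: the only real care needed is the bookkeeping of tangential versus normal components and the sign conventions ($JN=-\xi$, $AJ=-JA$, and $A$ self-adjoint). The conceptual crux is simply recognizing that differentiating the single relation $AN=N$ simultaneously determines the $1$-form $q$ along $\mathcal C$ and the eigenvalue behaviour of $S$ on $\mathcal C$; as a consistency check, differentiating $A\xi=-\xi$ instead reproduces the tangential relation $A\phi SX=-\phi SX$ of \eqref{e48}, which, via $\phi SX = JSX$ on $\mathcal C$ and $AJ=-JA$, again yields $ASX=SX$.
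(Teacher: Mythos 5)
Your proposal is correct and follows essentially the same route as the paper: both determine the $1$-form $q$ by differentiating the $\mathfrak A$-principality condition, arrive at $q(X)=2\alpha\eta(X)$ (the paper's \eqref{e414-(2)}), and then read off (i) and (ii) by restricting to $X\in\mathcal C$. The only cosmetic difference is that you extract both conclusions from the single vector identity $\nabla_X(AN)^T=0$ via \eqref{e36}, whereas the paper differentiates the scalar relation $g(AN,JN)=0$ to find $q$ and then differentiates $AJN=-JN$ separately to obtain (ii).
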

\begin{proof}
In order to give a proof of this lemma, let us put ${\bar{\nabla}}_XA=q(X)JA$ for any $X \in T{Q^*}^m$. Now let us differentiate
$g(AN,JN)=0$ along any $X \in T_pM$, $p \in M$. Then it follows that
\begin{equation*}
\begin{split}
0&=g(({\bar{\nabla}}_XA)N+A{\bar{\nabla}}_XN, JN)+g(AN,({\bar{\nabla}}_XJ)N+J{\bar{\nabla}}_XN)\\
&=q(X)-g(ASX,JN)-g({\xi},SX)
\end{split}
\end{equation*}
for any $X \in T_p M$, $p \in M$.  Then the $1$-form $q$ becomes
\begin{equation}\label{e414-(2)}
q(X)=-g(ASX,{\xi})+g({\xi},SX)=g(S{\xi},X)+g({\xi},SX)=2{\alpha}{\eta}(X),
\end{equation}
where we have used that the unit normal $N$ is $\mathfrak A$-principal, that is, $A{\xi}=-{\xi}$. Then this gives (i) for any $X \in {\mathcal C}$.
\par
\vskip 6pt

On the other hand, we differentiate the formula $AJN=-JAN=-JN$ along the distribution $\mathcal C$. Then by the K\"ahler structure and the expression of
${\bar{\nabla}}_XA=q(X)JA$, we have
$$q(X)JAJN-AJSX=JSX.$$
From this, together with $\rm (i)$, it follows that $-AJSX = JASX =JSX$ , which implies $ASX=SX$ for any $X \in {\mathcal C}$.
\end{proof}

Now let us assume that $M$ is a real hypersurface in the complex hyperbolic quadric ${Q^*}^m$ with isometric Reeb flow. Then the commuting shape operator $S{\phi}={\phi}S$ implies $S{\xi}={\alpha}{\xi}$, that is, $M$ is Hopf.
We will now prove that the Reeb curvature $\alpha$ of a Hopf hypersurface is constant if the normal vector field is ${\mathfrak A}$-isotropic. Assume that the unit normal vector field $N$ is ${\mathfrak A}$-isotropic everywhere. Then we have ${\beta} = g(A{\xi},{\xi})=0$ in Lemma~\ref{lemma 3.3}. So \eqref{e31} implies
\[
Y\alpha  =  (\xi\alpha)\eta(Y)
\]
for all $Y \in TM$.
Since ${\rm grad}^M \alpha =(\xi\alpha)\xi$, we can compute the Hessian ${\rm Hess}^M \alpha$ by
\begin{eqnarray*}
({\rm Hess}^M \alpha)(X,Y) & = & g(\nabla_X{\rm grad}^M \alpha , Y) \\
& = & X(\xi\alpha)\eta(Y) + (\xi\alpha)g(\phi SX,Y).
\end{eqnarray*}
As ${\rm Hess}^M \alpha$ is a symmetric bilinear form, the previous equation implies
\[
({\xi}\alpha)g((S\phi + \phi S)X,Y) = 0
\]
for all vector fields $X,Y$ on $M$ which are tangential to ${\mathcal C}$.

\vskip 6pt

Now let us assume that $S{\phi}+{\phi}S=0$. For every principal curvature vector $X \in {\mathcal C}$ such that $SX={\lambda}X$ this implies $S{\phi}X=-{\phi}SX=-{\lambda}{\phi}X$. We assume $||X|| = 1$ and put $Y = \phi X$. Using the normal vector field $N$ is $\mathfrak A$-isotropic, that is ${\beta}=0$ in Lemma~\ref{lemma 3.3}, we know that
$$-{\lambda}^2{\phi}X+{\phi}X={\rho}(X)B{\xi}+g(X,B{\xi}){\phi}B{\xi}.$$
From this, taking the inner product with ${\phi}X$ and using
$$g(X,B{\xi})=g(X,A{\xi})=-g({\phi}X,AN)=-{\rho}({\phi}X),$$
we have
\begin{equation*}
\begin{split}
-\lambda^2 + 1 &= \rho(X)\eta(B\phi X) - \rho(\phi X)\eta(BX)\\
& =  g(X,AN)^2 + g(X,A\xi)^2 =  ||X_{{\mathcal C} \ominus {\mathcal Q}}||^2 \leq 1,
\end{split}
\end{equation*}
where $X_{{\mathcal C} \ominus {\mathcal Q}}$ denotes the orthogonal projection of $X$ onto ${\mathcal C} \ominus {\mathcal Q}$.

On the other hand, from the commutativity of $S$ and $\phi$ and the above equation for $SX={\lambda}X$ it follows that
$$-{\lambda}{\phi}X=-{\phi}SX=S{\phi}X={\phi}SX={\lambda}{\phi}X.$$
This gives that the principal curvature ${\lambda}=0$. Then the above two equation give
$||X_{{\mathcal C} \ominus {\mathcal Q}}||^2 = 1$
for all principal curvature vectors $X \in {\mathcal C}$ with $||X|| = 1$. This is only possible if ${\mathcal C} = {\mathcal C} \ominus {\mathcal Q}$, or equivalently, if ${\mathcal Q} = 0$. Since $m \geq 3$ this is not possible. Hence we must have $S{\phi}+{\phi}S \neq 0$ everywhere and therefore $d\alpha(\xi) = 0$. From this, together with (\ref{e31}), we get ${\rm grad}^M \alpha = 0$. Since $M$ is connected this implies that $\alpha$ is constant. Thus we have proved:
\begin{lem} \label{lemma 4.3}
Let $M$ be a real hypersurface in the complex hyperbolic quadric ${Q^*}^{m}$, $m \geq 3$, with isometric Reeb flow and ${\mathfrak A}$-isotropic normal vector
field $N$ everywhere. Then $\alpha$ is constant.
\end{lem}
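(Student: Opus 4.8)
The plan is to reduce the entire statement to the single scalar equation $\xi\alpha = 0$, and then to obtain a contradiction from assuming $\xi\alpha \neq 0$. Because $N$ is $\mathfrak A$-isotropic everywhere we have $\beta = g(A\xi,\xi) = 0$, and the normal decomposition \eqref{normal vector} already forces $g(\xi,AN) = 0$; substituting both into \eqref{e31} makes the two correction terms drop out, so that identity collapses to $Y\alpha = (\xi\alpha)\eta(Y)$ for every $Y \in TM$. Consequently, once I prove $\xi\alpha = 0$ on all of $M$, this reads ${\rm grad}^M\alpha = 0$, and connectedness of $M$ yields that $\alpha$ is constant. So the whole argument is devoted to killing $\xi\alpha$.

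First I would extract an algebraic consequence from the symmetry of the Hessian. Since ${\rm grad}^M\alpha = (\xi\alpha)\xi$, I compute $({\rm Hess}^M\alpha)(X,Y) = X(\xi\alpha)\eta(Y) + (\xi\alpha)\,g(\phi SX,Y)$; equating this bilinear form with its transpose and restricting $X,Y$ to $\mathcal C$ gives $(\xi\alpha)\,g((S\phi + \phi S)X,Y) = 0$. Hence on the open set $\mathcal U = \{p : (\xi\alpha)_p \neq 0\}$ the shape operator must satisfy the anticommuting relation $S\phi + \phi S = 0$ on $\mathcal C$. The goal is then to show $\mathcal U = \emptyset$.

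On $\mathcal U$ I would feed this anticommuting relation into Lemma~\ref{lemma 3.3} with $\beta = 0$. For a unit principal curvature vector $X \in \mathcal C$ with $SX = \lambda X$, anticommuting gives $S\phi X = -\lambda\phi X$, so $2S\phi SX = -2\lambda^2\phi X$ and the $\alpha(\phi S + S\phi)X$ term vanishes; Lemma~\ref{lemma 3.3} then reduces to $(1-\lambda^2)\phi X = \rho(X)B\xi + g(X,B\xi)\phi B\xi$. Taking the inner product with $\phi X$ and using the isotropic identities (in particular $g(X,B\xi) = g(X,A\xi) = -\rho(\phi X)$ together with $g(X,B\xi) = \eta(BX)$) yields $1 - \lambda^2 = g(X,AN)^2 + g(X,A\xi)^2 = \|X_{\mathcal C \ominus \mathcal Q}\|^2 \leq 1$. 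The decisive move is then to invoke the isometric Reeb flow hypothesis itself: the commuting relation $S\phi = \phi S$ combined with the anticommuting relation $S\phi + \phi S = 0$ forces $S\phi X = 0$, and since $\phi X \neq 0$ for $X \in \mathcal C$ this gives $\lambda = 0$. Plugging $\lambda = 0$ back produces $\|X_{\mathcal C \ominus \mathcal Q}\|^2 = 1$ for every unit principal vector of $\mathcal C$, which means $\mathcal C = \mathcal C \ominus \mathcal Q$, i.e.\ $\mathcal Q = 0$. But for $\mathfrak A$-isotropic $N$ with $m \geq 3$, Lemma~\ref{lemma 3.1} shows $\mathcal Q$ has real rank $2m-4 > 0$, so this is impossible; thus $\mathcal U$ is empty, $\xi\alpha \equiv 0$, and $\alpha$ is constant.

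The step I expect to be the main obstacle is the inner-product bookkeeping in the third paragraph: one must track the $\mathfrak A$-isotropic relations among $\xi$, $A\xi$, $AN$, $B\xi$ and $\phi B\xi$ carefully enough to recognize that $\rho(X)\eta(B\phi X) - \rho(\phi X)\eta(BX)$ equals exactly $g(X,AN)^2 + g(X,A\xi)^2$, the squared length of the $\mathcal C \ominus \mathcal Q$-component of $X$. The other delicate point is conceptual rather than computational: the contradiction closes only because one \emph{simultaneously} uses the commuting relation (supplied by the isometric Reeb flow hypothesis) and the anticommuting relation (forced by Hessian symmetry on $\mathcal U$); it is their joint use that pins down $\lambda = 0$ and collapses $\mathcal Q$.
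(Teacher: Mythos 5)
Your proposal is correct and follows essentially the same route as the paper: reduce to showing $\xi\alpha=0$ via \eqref{e31}, use symmetry of the Hessian to force $S\phi+\phi S=0$ wherever $\xi\alpha\neq 0$, feed this into Lemma~\ref{lemma 3.3} with $\beta=0$ to obtain $1-\lambda^2=\|X_{\mathcal C\ominus\mathcal Q}\|^2$, and combine with the commuting relation from isometric Reeb flow to get $\lambda=0$ and hence the impossible conclusion $\mathcal Q=0$ for $m\geq 3$. The only cosmetic difference is that you phrase the contradiction as emptiness of the open set $\mathcal U=\{\xi\alpha\neq 0\}$, whereas the paper argues pointwise that $S\phi+\phi S\neq 0$; the content is identical.
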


\vskip 10pt

\section{Reeb Parallel Structure Jacobi Operator and Proof of Main Theorem~1}\label{section 5}

The curvature tensor $R(X,Y)Z$ for a Hopf real hypersurface $M$ in the complex hyperbolic quadric $\HQQ$ induced from the curvature tensor of $\HQ$ is given in section~\ref{section 3}. Now the structure Jacobi operator $R_{\xi}$ from section~\ref{section 3} can be rewritten as follows:
\begin{equation}\label{e51}
\begin{split}
R_{\xi}(Y)&=R(Y,{\xi}){\xi}\\
&=-Y+{\eta}(Y){\xi}-{\beta}(AY)^T+g(AY,{\xi})A{\xi}+g(AY,N)(AN)^T\\
& \quad \ \ +{\alpha}SY-g(SY,{\xi})S{\xi},
\end{split}
\end{equation}
where we have put ${\alpha}=g(S{\xi},{\xi})$ and ${\beta}=g(A{\xi},{\xi})$, because we assume that $M$ is Hopf.  The Reeb vector field ${\xi}=-JN$ and the anti-commuting property $AJ=-JA$ gives that the function ${\beta}$ becomes ${\beta}=-g(AN,N)$. When this function ${\beta}=g(A{\xi},{\xi})$ identically vanishes, we say that a real hypersurface~$M$ in $\HQ$ is $\mathfrak A$-isotropic as in Lemma~\ref{lemma singular}.

\vskip 6pt

\noindent Here we use the assumption of being Reeb parallel structure Jacobi operator, that is, ${\nabla}_\xi R_{\xi}=0$. Then~(\ref{e51}), together with (\ref{e34}) and (\ref{e36}), gives that
\begin{equation*}\label{e52}
\begin{split}
({\nabla}_{\xi} R_{\xi})Y  &={\nabla}_\xi(R_{\xi}Y )-R_{\xi}({\nabla}_\xi Y)\\
& = -(\xi{\beta})(AY)^T +g(q(\xi)JA{\xi}+{\alpha}AN,Y)A{\xi}\\
& \quad \, -{\beta}\Big[q(\xi)JAY+A{\nabla}_{\xi}Y+ \alpha \eta(Y) AN -g\big( q(\xi)JAY+A{\nabla}_{\xi}Y \\
& \quad \quad \quad  + \alpha \eta(Y) AN, N\big)N +\alpha g(AY,\xi)N+ \alpha g(AY,N)\xi- \alpha g(\xi,(AY)^T)N\Big]\\
& \quad \, +g(AY,{\xi})\Big[q(\xi)JA{\xi}+{\alpha}AN -\big \{q(\xi)g(JA{\xi},N)+{\alpha}g(AN,N)\big \}N\Big]\\
& \quad \, +\Big[g\big(q(\xi)JAN-\alpha A\xi + \alpha g(AN,N) \xi,Y\big)(AN)^T + g(Y,(AN)^T)q(\xi)JAN \\
& \quad \quad \quad  +g(Y,(AN)^T)\big\{- \alpha A\xi + \alpha g(AN,N)\xi - g\big(q(\xi)JAN-\alpha A\xi,N\big)N\big\}\Big]\\
&\quad \, +(\xi {\alpha})SY+{\alpha}({\nabla}_{\xi}S)Y-(\xi {\alpha}^2){\eta}(Y){\xi},
\end{split}
\end{equation*}
where we have used $g(A{\xi},N)=0$.

\noindent From this, by taking the inner product with the Reeb vector field  $\xi$, and using ${\nabla}_{\xi}R_{\xi}=0$, we have
\begin{equation}\label{e53}
\begin{split}
0& = -({\xi}{\beta})g(AY,{\xi})-{\beta}\{q({\xi})g(JAY,{\xi})+g({\nabla}_{\xi}Y,A{\xi})+{\alpha}g(AY,N)\}\\
&\quad \ \ +g(q({\xi})JA{\xi}+{\alpha}AN,Y)g(A{\xi},{\xi})\\
&\quad \ \ +g(Y, AN)\big\{q({\xi}) g(JAN,{\xi})-{\alpha}g(A{\xi},{\xi})+{\alpha}g(AN,N)\big \}\\
&\quad \ \ +{\alpha}({\xi}{\alpha}){\eta}(Y)+{\alpha}g(({\nabla}_{\xi}S)Y,{\xi})-({\xi}{\alpha}^2){\eta}(Y).
\end{split}
\end{equation}
Then first, using $g(A{\xi},N)=0$ and $({\xi}{\beta})=0$ in (\ref{e37}), we have
\begin{equation}\label{e54}
0={\beta}\big\{g({\nabla}_{\xi}Y, A{\xi})-\big(q({\xi})-2{\alpha}\big)g(Y, AN)\big\}.
\end{equation}
From this we have ${\beta}=0$ or $g({\nabla}_{\xi}Y,A{\xi})=(q({\xi})-2{\alpha})g(Y,AN)$.
The first part ${\beta}=g(A{\xi},{\xi})=0$ implies $N$ is $\mathfrak A$-isotropic. Now let us work on the open subset ${\mathcal U}=\{p \in M \vert \, \beta(p)\neq 0\}$.
Then it follows that
\begin{equation}\label{e55}
g({\nabla}_{\xi}Y,A{\xi})=(q({\xi})-2{\alpha})g(Y,AN)
\end{equation}
Then by putting $Y=(AN)^T$ in (\ref{e55}), we have
\begin{equation}\label{e56}
\begin{split}
g({\N}_{\xi}(AN)^T,A{\xi})&=(q({\xi})-2{\alpha})g((AN)^T,AN)\\
&=(q({\xi})-2{\alpha})(1-{\beta}^2)\\
&=q({\xi})-2{\alpha}-q({\xi}){\beta}^2+2{\alpha}{\beta}^2
\end{split}
\end{equation}
On the other hand, by (\ref{e36}) the left term of (\ref{e56}) becomes
\begin{equation}\label{e57}
\begin{split}
g({\N}_{\xi}(AN)^T,A{\xi})&=q({\xi})g(JAN,A{\xi})-{\alpha}g(A{\xi},A{\xi})+{\alpha}g(AN,N)g({\xi},A{\xi})\\
&=q({\xi})-{\alpha}-{\alpha}{\beta}^2.
\end{split}
\end{equation}
Then from (\ref{e56}) and (\ref{e57}) it follows that
\begin{equation}\label{e58}
{\alpha}+q({\xi}){\beta}^2-3{\alpha}{\beta}^2=0.
\end{equation}
So for any $Y$ orthogonal to $A{\xi}$ by (\ref{e35}), we have
\begin{equation}\label{e59}
g({\N}_{\xi}Y,A{\xi})=-g(Y,{\N}_{\xi}A{\xi})=(q({\xi})-{\alpha})g(Y,AN).
\end{equation}
From this, comparing with (\ref{e55}), we have
\begin{equation}\label{eq: 5.10}
\alpha g(AN, Y) =0
\end{equation}
for all $Y$ orthogonal to $A{\xi}$.

\vskip 6pt

By virtue of Lemma~\ref{lemma singular} in section~\ref{section 3}, if the Reeb curvature function~$\alpha = g(S\xi, \xi)$ is vanishing, then the unit normal vector field is singular. Thus now we only consider the case $\alpha \neq 0$ on $\mathcal U$. By \eqref{eq: 5.10}, together with $AN = AJ\xi = - JA\xi = -\phi A \xi - g(A\xi, \xi)N$, it follows that $g(\phi A \xi , Y) =0 $ for all $Y  \in  \mathfrak F$. Here we denote $\mathfrak F=\{ Y \in T_{p}M \,| \,Y \bot A\xi, \ p \in \mathcal U \}$. Substituting $Y=\phi A\xi (\in \mathfrak F)$, we get $0=1-g^{2}(A\xi, \xi) = 1 - \beta^{2}$. This implies that the unit normal $N$ is $\mathfrak A$-principal on $\mathcal U$. Together with Lemma~\ref{lemma singular} and this observation give the following lemma.
\begin{lem}\label{Lemma 5.2}
Let $M$ be a Hopf real hypersurface in the complex hyperbolic quadric $\HQ$, $m\geq 3$, with Reeb parallel structure Jacobi operator. Then the unit normal vector field $N$ is $\mathfrak A$-principal or $\mathfrak A$-isotropic.
\end{lem}

By virtue of Lemma~\ref{Lemma 5.2}, we can consider two classes of real hypersurfaces in complex hyperbolic quadric $\HQ$ with Reeb parallel structure Jacobi operator: with $\mathfrak A$-principal unit normal vector field $N$ or otherwise, with
$\mathfrak A$-isotropic unit normal vector field $N$. We will consider each case in sections $6$ and $7$ respectively.
\par
\vskip 6pt

\section{Reeb Parallel Structure Jacobi Operator with $\mathfrak A$-Isotropic Normal Vector Field}\label{section 7}

In this section we assume that the unit normal vector field $N$ of a real hypersurface $M$ in the complex hyperbolic quadric $\HQQ$ is $\mathfrak A$-isotropic.  Then the normal vector field $N$ can be written as
$$N=\frac{1}{\sqrt 2}(Z_1+JZ_2)$$
for $Z_1,Z_2 \in V(A)$, where $V(A)$ denotes the $(+1)$-eigenspace of the complex conjugation $A \in {\mathfrak A}$.
Here we note that $Z_1$ and $Z_2$ are orthonormal, i.e., we have ${\Vert}Z_1{\Vert}={\Vert}Z_2{\Vert}=1$ and $Z_1{\bot}Z_2$.
Then it follows that
$$
AN=\frac{1}{\sqrt 2}(Z_1-JZ_2),\ \ AJN=-\frac{1}{\sqrt 2}(JZ_1+Z_2),\ \ \text{and}\ \ JN=\frac{1}{\sqrt 2}(JZ_1-Z_2).
$$
Then it gives that
$$
g({\xi},A{\xi})=g(JN,AJN)=0, \ \ g({\xi},AN)=0, \ \ \text{and}\ \ g(AN,N)=0.
$$
By virtue of these formulas for $\mathfrak A$-isotropic unit normal vector field, the structure Jacobi operator is given by
\begin{equation}\label{e71}
\begin{split}
{R}_{\xi}(X)&=R(X,{\xi}){\xi}\\
&=-X+{\eta}(X){\xi}+g(AX,{\xi})A{\xi}+g(JAX,{\xi})JA{\xi}\\
& \quad \ \ +g(S{\xi},{\xi})SX-g(SX,{\xi})S{\xi}.
\end{split}
\end{equation}
On the other hand, we know that $JA{\xi}=-JAJN=AJ^2N=-AN$, and $g(JAX,{\xi})=-g(AX,J{\xi})=-g(AX,N)$. Now the structure Jacobi operator $R_{\xi}$ can be rearranged as follows:
\begin{equation}\label{e72}
{R}_{\xi}(X)=-X+{\eta}(X){\xi}+g(AX,{\xi})A{\xi}+g(X,AN)AN+{\alpha}SX-{\alpha}^2{\eta}(X){\xi}.
\end{equation}
Differentiating (\ref{e72}) we obtain
\begin{equation}\label{e73}
\begin{split}
({\nabla}_YR_{\xi})X& = {\nabla}_Y(R_{\xi}(X))-R_{\xi}({\nabla}_YX)\\
& = ({\nabla}_Y{\eta})(X){\xi}+{\eta}(X){\nabla}_Y{\xi}+g(X,{\nabla}_Y(A{\xi}))A{\xi}\\
& \quad \ \ +g(X,A{\xi}){\nabla}_Y(A{\xi})+g(X,{\nabla}_Y(AN))AN+g(X,AN){\nabla}_Y(AN)\\
& \quad \ \ +(Y{\alpha})SX+{\alpha}({\nabla}_YS)X-(Y{\alpha}^2){\eta}(X){\xi}\\
& \quad \ \ -{\alpha}^2({\nabla}_Y{\eta})(X){\xi}-{\alpha}^2{\eta}(X){\nabla}_Y{\xi}.
\end{split}
\end{equation}
On the other hand, by virtue of Lemma~\ref{Lemma 3.5}, we prove the following for a Hopf hypersurface in ${Q^*}^{m}$ with $\mathfrak A$-isotropic unit normal as follows:
\par
\vskip 6pt
\begin{lem}\label{Lemma 7.1}
 \quad Let $M$ be a Hopf real hypersurface in the complex hyperbolic quadric $\HQ$, $m \geq 3$, with $\mathfrak A$-isotropic unit normal. Then
\begin{equation*}
SAN=0, \quad \text{and} \quad SA{\xi}=0.
\end{equation*}
\end{lem}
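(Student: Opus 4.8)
The plan is to exploit the identities forced by $\mathfrak A$-isotropy, namely $g(A\xi,\xi)=0$, $g(AN,\xi)=0$ and $g(AN,N)=0$, which hold \emph{identically} on $M$ because $N$ is isotropic everywhere. Since these relations make both $AN$ and $A\xi$ tangential (their normal components $g(AN,N)$ and $g(A\xi,N)$ vanish), it suffices to prove $g(SAN,X)=0$ and $g(SA\xi,X)=0$ for every tangent vector field $X$. I would obtain these by differentiating the first two orthogonality relations along $M$ and reading off the shape-operator terms, using the ambient formula $\bar\nabla_X A=q(X)JA$ from Lemma~\ref{Lemma 3.5}, the Weingarten equation $\bar\nabla_X N=-SX$, and $\bar\nabla_X\xi=\phi SX+\alpha\eta(X)N$ (valid since $M$ is Hopf).

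First, to get $SAN=0$, I differentiate $g(A\xi,\xi)=0$ in the direction $X$. Every term carrying a factor $q(X)$ or $\alpha\eta(X)$ drops out after pairing with $\xi$, because $g(JA\xi,\xi)=-g(A\xi,N)=0$ and $g(AN,\xi)=0$. What survives is $g(A\phi SX,\xi)+g(A\xi,\phi SX)$. Using the conjugation identity $\phi A\xi=-AN$ (which follows from $A\xi=JAN$ and $\eta(A\xi)=0$) together with self-adjointness of $A$ and $S$ and skew-symmetry of $\phi$, each of these two terms equals $g(SAN,X)$, so the vanishing of the derivative becomes $2g(SAN,X)=0$; hence $SAN=0$. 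I note this is precisely equation~\eqref{e37} specialized to $\beta\equiv0$.

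Next, for $SA\xi=0$, I repeat the computation starting from $g(AN,\xi)=0$. Differentiating and applying the same three structure formulas, the $q(X)$-term again vanishes since $g(JAN,\xi)=g(A\xi,\xi)=0$, and the normal contributions cancel because $g(AN,N)=0$. The surviving terms are $-g(ASX,\xi)+g(AN,\phi SX)$; invoking $\phi AN=A\xi$ and the symmetry of $A$, $S$ and skew-symmetry of $\phi$, both reduce to $-g(SA\xi,X)$, giving $-2g(SA\xi,X)=0$ and therefore $SA\xi=0$.

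The calculations are short once the correct identities are in place; the only delicate point is the bookkeeping of the conjugation relations $A\xi=JAN$, $\phi AN=A\xi$, $\phi A\xi=-AN$ together with the signs coming from the skew-symmetry of $\phi$ and $J$, so that all $q(X)$- and $N$-components cancel and one is left with a clean scalar multiple of $g(SAN,X)$, respectively $g(SA\xi,X)$. Because this argument uses only that $g(A\xi,\xi)$, $g(AN,\xi)$ and $g(AN,N)$ vanish identically, it requires neither the Reeb-parallel hypothesis nor any assumption on the Reeb curvature $\alpha$.
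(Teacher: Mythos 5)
Your proof is correct and follows essentially the same route as the paper: differentiate the orthogonality relations forced by $\mathfrak A$-isotropy, using $\bar\nabla_X A=q(X)JA$ together with the Gauss and Weingarten formulas, and note that the $q$-terms and normal components drop out, leaving $2g(SAN,X)=0$ and $-2g(SA\xi,X)=0$. The only cosmetic difference is that for $SAN=0$ the paper differentiates $g(AN,N)=0$ rather than the equivalent relation $g(A\xi,\xi)=0$ (these functions are negatives of each other, so your step is literally equation~\eqref{e37} with $\beta\equiv 0$), and your remark that neither the Reeb-parallel hypothesis nor $\alpha\neq 0$ is needed is consistent with the paper's statement of the lemma.
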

\par
\vskip 6pt
\begin{proof}
Let us denote by ${\mathcal C}\ominus{\mathcal Q}=\text{Span}[A{\xi},AN]$. Since $N$ is isotropic, $g(AN,N)=-g(A\xi, \xi) = 0$.
By differentiating $g(AN,N)=0$, and using $({\bar{\nabla}}_XA)Y=q(X)JAY$ in the introduction and the equation of Weingarten, we know that
\begin{equation*}
\begin{split}
0&=g({\bar{\nabla}}_X(AN),N)+g(AN, {\bar\nabla}_XN)\\
&=g(q(X)JAN-ASX,N)-g(AN,SX)\\
&=-2g(ASX,N).
\end{split}
\end{equation*}
Then $SAN=0$.  Moreover, by differentiating $g(A{\xi},N)=0$, and using $g(AN,N)=0$ and $g(A{\xi},{\xi})=0$, we have the following formula
\begin{equation*}
\begin{split}
0&=g({\bar{\nabla}}_X(A{\xi}),N)+g(A{\xi},{\bar{\nabla}}_XN)\\
&=g(q(X)JA{\xi}+A({\phi}SX+g(SX,{\xi})N),N)-g(SA{\xi},X)\\
&=-2g(SA{\xi},X)
\end{split}
\end{equation*}
for any $X \in T_p M$, $p \in M$, where in the third equality we have used ${\phi}AN=JAN=-AJN=A{\xi}$. Then it follows that
$$SA{\xi}=0.$$
It completes the proof of our assertion.
\end{proof}
\noindent Here let us use the equation of Gauss and Weingarten formula as follows:
\begin{equation*}
\begin{split}
{\nabla}_Y(A{\xi})&={\bar{\nabla}}_Y(A{\xi})-{\sigma}(Y,A{\xi})\\
&=({\bar\nabla}_YA){\xi}+A{\bar\nabla}_Y{\xi}-{\sigma}(Y,A{\xi})\\
&=q(Y)JA{\xi}+A\{{\phi}SY+{\eta}(SY)N\}-g(SY,A{\xi})N,\\
\end{split}
\end{equation*}
and
\begin{equation*}
\begin{split}
{\nabla}_Y(AN)&={\bar{\nabla}}_Y(AN)-{\sigma}(Y,AN)\\
&=({\bar\nabla}_YA)N+A{\bar\nabla}_YN-{\sigma}(Y,AN)\\
&=q(Y)JAN-ASY-g(SY,AN)N.\\
\end{split}
\end{equation*}
\noindent Substituting these formulas into (\ref{e73})and using our assumption, $M$ is a Hopf real hypersurface with $\mathfrak A$-isotropic singular normal vector~$N$ in $Q^{*m}$, it yields
\begin{equation}\label{e74}
\begin{split}
({\nabla}_YR_{\xi})X & = g(X, \phi SY) \xi + \eta(X) \phi SY \\
&\quad \ \    - g(A \phi SY, \xi) BX  - g(A\xi, \phi SY)BX \\
&\quad \ \    + \big \{ g(A\phi SY, X) + \alpha \eta(Y)g(AN,X) \big \} A\xi \\
&\quad \ \    + g(A\xi,X)  \big \{ B\phi SY + \alpha \eta(Y) AN  \big \}\\
&\quad \ \    - g(AX, SY)AN - g(AN, X)BSY \\
&\quad \ \    + (Y \alpha) SX + \alpha (\nabla_{Y}S)X - 2\alpha (Y\alpha) \eta(X) \xi \\
&\quad \ \    - \alpha^{2}g(X, \phi SY) \xi - \alpha^{2} \eta(X) \phi SY.
\end{split}
\end{equation}
From this and using the assumption of Reeb parallel structure Jacobi operator, it follows that
\begin{equation}\label{e76}
({\xi}{\alpha})SX+{\alpha}({\N}_{\xi}S)X-2\alpha ({\xi}\alpha){\eta}(X){\xi}=0.
\end{equation}

\begin{lem}\label{Lemma 7.2}
Let $M$ be a real hypersurface in the complex hyperbolic quadric $\HQ$, $m \geq 3$, with Reeb parallel structure Jacobi operator and non-vanishing geodesic Reeb flow. If the unit normal vector field~$N$ of $M$ is $\mathfrak A$-isotropic, then the Reeb curvature function $\alpha$ is constant. Moreover, the shape operator~$S$ should be Reeb parallel, that is, the shape operator~$S$ satisfies the property of $\nabla_{\xi} S =0$.
\end{lem}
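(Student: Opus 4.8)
The plan is to prove the two assertions in two stages: first show that the Reeb curvature $\alpha$ is constant, and then read off $\nabla_\xi S=0$ directly from \eqref{e76}. Since $M$ is Hopf with $\alpha\neq 0$ and $N$ is $\mathfrak{A}$-isotropic, we have $\beta=g(A\xi,\xi)=0$ and $g(\xi,AN)=0$, so \eqref{e31} collapses to $Y\alpha=(\xi\alpha)\eta(Y)$, i.e.\ $\mathrm{grad}^M\alpha=(\xi\alpha)\xi$. I would argue by contradiction: suppose the open set $\mathcal U=\{p\in M:(\xi\alpha)_p\neq 0\}$ is nonempty and work on $\mathcal U$, where also $\alpha\neq 0$. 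As in the proofs of Lemma~\ref{lemma 4.1} and Lemma~\ref{lemma 4.3}, computing $\mathrm{Hess}^M\alpha$ and exploiting its symmetry gives $(\xi\alpha)g((S\phi+\phi S)X,Y)=0$ for all $X,Y\in\mathcal C$, and hence $S\phi+\phi S=0$ on $\mathcal C$ over $\mathcal U$.

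Next I would pin down the behaviour of $S$ on $\mathcal Q$. Feeding $\beta=0$ together with $S\phi+\phi S=0$ into Lemma~\ref{lemma 3.4} yields $S\phi S=-\phi$ on $\mathcal Q$. This already forces $S$ to be injective on $\mathcal Q$: if $SY=0$ for some nonzero $Y\in\mathcal Q$, then $-\phi Y=S\phi SY=0$, which is impossible. Since $m\geq 3$, the distribution $\mathcal Q$ has real dimension $2m-4\geq 2$, so it contains nonzero test vectors. Note also that for $Y\in\mathcal Q\subset\mathcal C$ one has $g(SY,\xi)=\alpha g(Y,\xi)=0$, so $SY\in\mathcal C$ and thus $\phi^2(SY)=-SY$; this last fact will drive the contradiction.

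The core of the argument is to produce a second, independent expression for $\nabla_\xi S$ and to compare it with \eqref{e76}. Putting $X=\xi$ into the Codazzi equation of section~\ref{section 3} and using the $\mathfrak{A}$-isotropic identities $g(\xi,AN)=g(\xi,A\xi)=0$ and $JA\xi=-AN$ gives $(\nabla_\xi S)Y-(\nabla_Y S)\xi=-\phi Y+g(Y,AN)A\xi-g(Y,A\xi)AN$. Combining this with the Hopf identity $(\nabla_Y S)\xi=(Y\alpha)\xi+\alpha\phi SY-S\phi SY$ and $Y\alpha=(\xi\alpha)\eta(Y)$ yields an explicit formula for $(\nabla_\xi S)Y$. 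For $Y\in\mathcal Q$ all the $\xi$-, $A\xi$- and $AN$-terms drop out, and equating the result with $(\nabla_\xi S)Y=-\tfrac{\xi\alpha}{\alpha}SY$ coming from \eqref{e76} (here $\eta(Y)=0$) leaves
\[ S\phi SY-\alpha\,\phi SY-\tfrac{\xi\alpha}{\alpha}\,SY+\phi Y=0,\qquad Y\in\mathcal Q. \]
Substituting $S\phi SY=-\phi Y$ reduces this to $\bigl(\alpha\phi+\tfrac{\xi\alpha}{\alpha}I\bigr)SY=0$ for every $Y\in\mathcal Q$.

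Finally I would apply the operator $\alpha\phi-\tfrac{\xi\alpha}{\alpha}I$ and use $\phi^2=-I$ on $\mathcal C$ (valid as $SY\in\mathcal C$), which gives $-\bigl(\alpha^2+(\xi\alpha)^2/\alpha^2\bigr)SY=0$; since $\alpha\neq 0$, the coefficient is nonzero, so $SY=0$ for all $Y\in\mathcal Q$, contradicting the injectivity of $S$ on $\mathcal Q$. Hence $\mathcal U=\emptyset$, so $\xi\alpha\equiv 0$, $\mathrm{grad}^M\alpha=0$, and $\alpha$ is constant on the connected manifold $M$. With $\xi\alpha=0$ and $\alpha\neq 0$, \eqref{e76} collapses to $\alpha(\nabla_\xi S)X=0$, that is, $\nabla_\xi S=0$. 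I expect the main obstacle to be the bookkeeping in extracting the clean second expression for $\nabla_\xi S$ from Codazzi: one must track the $\mathfrak{A}$-isotropic relations carefully so that every term outside $\mathcal Q$ cancels, leaving the single scalar relation $\bigl(\alpha\phi+\tfrac{\xi\alpha}{\alpha}I\bigr)SY=0$; once this is in hand, the contradiction via $\phi^2=-I$ is immediate.
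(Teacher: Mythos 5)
Your proof is correct, but it reaches the key conclusion $\xi\alpha=0$ by a genuinely different mechanism than the paper. Both arguments start from the same two ingredients --- the reduced Reeb-parallelism identity \eqref{e76} and the Codazzi-derived expression \eqref{e77} for $(\nabla_{\xi}S)Y$ --- and both feed in the quadratic relation of Lemma~\ref{lemma 3.2}. The paper then arrives at \eqref{e710}, diagonalizes $S$ on $\mathcal Q$, and splits into the cases $\alpha=2\lambda$ (where $\alpha=\pm2$ is forced, hence constant) and $\alpha\neq 2\lambda$ (where $(\xi\alpha)\lambda=0$ forces $\lambda=0$ and then $\alpha=0$ on the bad set, a contradiction). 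You instead import the Hessian-symmetry argument that the paper only uses in Lemmas~\ref{lemma 4.1} and \ref{lemma 4.3}: on $\mathcal U=\{\xi\alpha\neq0\}$ it gives $S\phi+\phi S=0$, which via Lemma~\ref{lemma 3.4} yields $S\phi S=-\phi$ on $\mathcal Q$ and hence injectivity of $S|_{\mathcal Q}$; combining with \eqref{e76} and \eqref{e77} then collapses everything to $\bigl(\alpha\phi+\tfrac{\xi\alpha}{\alpha}I\bigr)SY=0$ on $\mathcal Q$, and applying $\alpha\phi-\tfrac{\xi\alpha}{\alpha}I$ kills $SY$ outright, contradicting injectivity since $\dim\mathcal Q=2m-4\geq 2$. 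Your route avoids the eigenvalue case split entirely and is arguably cleaner and more uniform; the paper's route stays inside the principal-curvature framework and, in its Case~I, exhibits the explicit form of the shape operator, which foreshadows the eventual classification. All the individual steps you use check out: $SY\in\mathcal C$ so $\phi^2SY=-SY$, the $A\xi$- and $AN$-terms in Codazzi vanish on $\mathcal Q$, and the final passage from $\xi\alpha\equiv0$ and $\alpha\neq0$ to $\nabla_\xi S=0$ via \eqref{e76} is exactly as in the paper.
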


\begin{proof}
By putting $X={\xi}$ in the equation of Codazzi in section~\ref{section 3}, we have
\begin{equation}\label{e77}
\begin{split}
({\N}_{\xi}S)Y&=({\N}_YS){\xi}-{\phi}Y+g(Y,AN)A{\xi}-g(Y,A{\xi})AN\\
&=(Y{\alpha}){\xi}+{\alpha}{\phi}SY-S{\phi}SY-{\phi}Y+g(Y,AN)A{\xi}-g(Y,A{\xi})AN.
\end{split}
\end{equation}
From this, together with (\ref{e76}), it follows that
\begin{equation}\label{e78}
\begin{split}
({\xi}&{\alpha})SX+{\alpha}(X{\alpha}){\xi} -({\xi}{\alpha}^2){\eta}(X){\xi} \\
& + \alpha \big \{{\alpha}{\phi}SX-S{\phi}SX-{\phi}X +g(X,AN)A{\xi}-g(X,A{\xi})AN \big \}=0.
\end{split}
\end{equation}
Then by taking the inner product (\ref{e78}) with the Reeb vector field $\xi$, we have ${\alpha}X{\alpha}={\alpha}({\xi}{\alpha}){\eta}(X)$. Then (\ref{e78}) gives
\begin{equation}\label{e79}
\begin{split}
({\xi}&{\alpha})SX-{\alpha}({\xi}{\alpha}){\eta}(X){\xi}+ {\alpha} \big \{{\alpha}{\phi}SX-S{\phi}SX-{\phi}X
+g(X,AN)A{\xi}-g(X,A{\xi})AN \big \}=0.
\end{split}
\end{equation}
Since the unit normal vector field $N$ is $\mathfrak A$-isotropic, Lemma~\ref{lemma 3.2} gives
$$
S{\phi}SX - \frac{\alpha}{2}({\phi}S+S{\phi})X = -{\phi}X+g(X,AN)A{\xi}-g(X,A{\xi})AN.
$$
Substituting this one into (\ref{e79}), we have
\begin{equation}\label{e710}
2({\xi}{\alpha})SX- 2{\alpha}({\xi}{\alpha}){\eta}(X){\xi}+{{\alpha}^2}({\phi}S-S{\phi})X=0.
\end{equation}

\vskip 6pt

On the other hand, by Lemma~\ref{lemma 3.2}, when a unit normal vector field~$N$ of $M$ is $\mathfrak A$-isotropic, we get
\begin{equation*}
2 S \phi SX - \alpha (\phi S + S \phi )X + 2 \phi X - 2g(AN, X) A \xi + 2 g(X, A\xi) AN =0
\end{equation*}
for any $X \in T_{p}M$, $p \in M$. For some $X_{0} \in \mathcal Q := \{ X \in TM\,|\, X \bot \xi, A{\xi}, AN \}$ such that $SX_{0}={\lambda}X_{0}$, it becomes $(2 \lambda - \alpha) S \phi X_{0} = (\alpha \lambda -2) \phi X_{0}$. Thus we obtain:
\begin{itemize}
\item {if $\alpha = 2 \lambda$, then $\lambda = \pm 1$. Moreover, $\alpha = \pm 2$.}
\item {if $\alpha \neq 2 \lambda$, then the vector $\phi X_{0}$ is also principal with eigenvalue~$\mu$, where $\mu = \frac{\alpha {\lambda}-2}{2{\lambda}-{\alpha}}$.}
\end{itemize}
From this, let us consider of two cases as follows.
\vskip 3pt

{\bf Case I.} \quad $\alpha = 2 \lambda$

\vskip 3pt

Since $S$ is symmetric, we can choose a basis $\{e_{1}=\xi, e_{2}=A\xi, e_{3}=AN, e_{4} \cdots, e_{2m-1}\}$ for $T_{p}M$ such that $S e_{i} = \lambda_{i} e_{i}$ (in particular, $\lambda_{1}=\alpha$, $\lambda_{2}=\lambda_{3}=0$). It follows that the expression of the shape operator~$S$ becomes
\begin{equation*}
\begin{split}
S& =\mathrm{diag}(\alpha, 0, 0, \lambda_{4}, \cdots, \lambda_{2m-1})\\
& = \mathrm{diag}(\pm 2, 0, 0, \pm 1, \cdots, \pm 1),
\end{split}
\end{equation*}
where $\mathrm{diag}(a_{1}, \cdots, a_{n})$ denote a diagonal matrix whose diagonal entries starting in the upper left corner are $a_{1}, \cdots, a_{n}$. From this and \eqref{e76}, we see that $M$ becomes a Hopf real hypersurface in $Q^{*m}$, $m \geq 3$, with Reeb parallel shape operator, $\nabla_{\xi}S =0$.

\vskip 6pt

{\bf Case II.} \quad $\alpha \neq 2 \lambda$

\vskip 3pt

For some unit $X_{0} \in \mathcal Q$ such that $SX_{0}={\lambda}X_{0}$, we have $S{\phi}X_{0}={\mu}{\phi}X_{0}$, ${\mu}=\frac{\alpha {\lambda}-2}{2{\lambda}-{\alpha}}$.
Then (\ref{e710}) gives
\begin{equation}\label{e711}
2({\xi}{\alpha}){\lambda}X_{0}+{\alpha}^{2}(\lambda - \mu ){\phi}X_{0} =0.
\end{equation}
From this, by taking the inner product with $X_{0}$ we have $({\xi}{\alpha}){\lambda}=0$.
Now let us consider an open subset ${\mathcal U}=\{p \in M\,{\vert}\,({\xi}{\alpha})(p) \neq 0\}$ in $M$. Then on such an open subset $\mathcal U$ the principal curvature $\lambda$ identically vanishes. Then \eqref{e711} gives that the Reeb curvature function $\alpha$ identically vanishes on $\mathcal U$. This gives a contradiction. So such an open subset $\mathcal U$ can not exist. This means that ${\xi}{\alpha}=0$ on $M$. That is, $X{\alpha}=0$ for any $X$ on $M$ in $\HQ$. From this and using our assumption, $\alpha \neq 0$, \eqref{e76} implies that $M$ has a Hopf real hypersurface with Reeb parallel shape operator in $Q^{*m}$, $m \geq 3$.
\end{proof}

\vskip 6pt

Then by Theorem~D in the introduction we can assert the
following:
\begin{thm}\label{Theorem 7.1}
Let $M$ be a real hypersurface in the complex hyperbolic quadric $\HQ$, $m \geq 3$, with Reeb parallel structure Jacobi operator. If the unit normal vector field $N$ is $\mathfrak A$-isotropic and non-vanishing Reeb curvature, then
\begin{enumerate}[\rm (1)]
\item  a tube around the totally geodesic ${\mathbb C}H^k{\subset}{Q^{*^{2k}}}$, where $m=2k$,
\item  a horosphere whose center at infinity is $\mathfrak A$-isotropic singular.
\end{enumerate}
\end{thm}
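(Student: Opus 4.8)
The plan is to recognize this statement as a direct corollary of Theorem~D, once we know that the hypotheses force $M$ to have Reeb parallel shape operator; the only real content is to discard those models in Theorem~D that carry an $\mathfrak A$-principal rather than an $\mathfrak A$-isotropic normal.

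First I would invoke Lemma~\ref{Lemma 7.2}. Under the standing hypotheses---Reeb parallel structure Jacobi operator, $\mathfrak A$-isotropic unit normal $N$, and non-vanishing Reeb curvature $\alpha$---that lemma yields that $\alpha$ is constant and, more importantly, that the shape operator satisfies $\nabla_\xi S = 0$. Hence $M$ is a Hopf real hypersurface in $\HQ$, $m \geq 3$, with Reeb parallel shape operator and non-vanishing Reeb curvature, so every hypothesis of Theorem~D is met. Applying Theorem~D then presents the six mutually exclusive possibilities (1)--(6) for $M$, and it remains only to decide which of them are consistent with $N$ being $\mathfrak A$-isotropic.

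The final step is to eliminate cases (3)--(6). Among the six models, only (1), the tube around the totally geodesic $\mathbb C H^{k} \subset {Q^*}^{2k}$, and (2), the horosphere whose center at infinity is $\mathfrak A$-isotropic singular, carry an $\mathfrak A$-isotropic normal; indeed these are precisely the two hypersurfaces of Theorem~B, which are recorded there to have $\mathfrak A$-isotropic singular normal. By contrast, case (3) (the tube around ${Q^*}^{m-1}$), case (4) (the horosphere with $\mathfrak A$-principal center), case (5) (the tube around $\mathbb R H^{m}$), and case (6) (the two-curvature hypersurface) all have $\mathfrak A$-principal normal, as is visible from the fact that they emerge from the $\mathfrak A$-principal branch of the classification, governed by Lemma~\ref{lemma 4.1} and equation \eqref{e32}. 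Since the singularity type of the normal is a congruence invariant and we have assumed $N$ to be $\mathfrak A$-isotropic throughout, cases (3)--(6) are incompatible with our hypothesis and must be discarded, leaving exactly (1) and (2). The main obstacle is precisely this last bookkeeping: one must be certain that the branch of Theorem~D yielding (3)--(6) is genuinely the $\mathfrak A$-principal one, so that no $\mathfrak A$-isotropic example is lost; everything else is a direct citation of Lemma~\ref{Lemma 7.2} and Theorem~D.
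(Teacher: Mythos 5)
Your proposal is correct and follows essentially the same route as the paper: the paper derives Theorem 7.1 directly from Lemma 7.2 (which gives constant $\alpha$ and $\nabla_{\xi}S=0$ under the stated hypotheses) together with Theorem D, with the elimination of cases (3)--(6) left implicit because those models carry an $\mathfrak A$-principal normal. Your explicit bookkeeping of that last step only fills in detail the paper omits.
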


\vskip 10pt

\section{Reeb Parallel Structure Jacobi Operator with $\mathfrak A$-Principal Normal Vector Field}\label{section 8}
Let $M$ be a real hypersurface with non-vanishing geodesic Reeb flow, $\alpha \neq 0$, in the complex hyperbolic quadric~$\HQQ$, $m \geq 3$. In addition, we assume that $M$ has Reeb parallel structure Jacobi operator and $\mathfrak A$-principal normal vector field. Then the unit normal vector field $N$ satisfies $AN=N$
for a complex conjugation $A \in {\mathfrak A}$. Then it follows that $A{\xi}=-{\xi}$ and $g(A{\xi},{\xi})=-1$.

\vskip 6pt

By the assumption of Reeb parallel structure Jacobi operator, we have
\begin{equation}\label{e81}
\begin{split}
0&=({\N}_{\xi}R_{\xi})X \\
 & =\big\{q({\xi})JAX+g(SX,{\xi})N-g(S{\xi},AX)N \big\}\\
 & \quad \ \ +({\xi}{\alpha})SX+{\alpha}({\N}_{\xi}S)X-({\xi}{\alpha}^2){\eta}(X){\xi}
\end{split}
\end{equation}
From \eqref{e414-(2)}, we know $q({\xi})=2{\alpha}$. By Lemma~\ref{lemma 4.1}, the Reeb curvature function $\alpha$ is constant on~$M$. So (\ref{e81}) reduces to the following
\begin{equation*}
{\alpha}({\N}_{\xi}S)X = -2\alpha {\phi}AX.
\end{equation*}
Since $M$ has non-vanishing geodesic Reeb flow, that is, $\alpha=g(S\xi, \xi) \neq 0$, we have
\begin{equation}\label{e82}
({\N}_{\xi}S)X = -2{\phi}AX.
\end{equation}
On the other hand, by using the equation of Codazzi in section~\ref{section 3}, we have
\begin{equation*}
\begin{split}
g\big(({\N}_XS){\xi}-({\N}_{\xi}S)X,Z\big)&= g({\phi}X,Z)-g(X,AN)g(A{\xi},Z)\\
& \quad \ \ -g(X,A{\xi})g(JA{\xi},Z)+g({\xi},A{\xi})g(JAX,Z)\\
& = g({\phi}X,Z)-g({\phi}AX,Z).
\end{split}
\end{equation*}
In addition, since $M$ is Hopf, it leads to
\begin{equation*}
\begin{split}
({\N}_{\xi}S)X &= (\nabla_{X}S)\xi - \phi X + \phi AX \\
& = \alpha \phi SX - S \phi SX  - \phi X + \phi AX
\end{split}
\end{equation*}
From this, together with \eqref{e82}, it follows that
\begin{equation}\label{e83}
\alpha \phi SX - S \phi SX  - \phi X = - 3 \phi AX.
\end{equation}
By virtue of Lemma~\ref{lemma 3.2}, for the $\mathfrak A$-principal unit normal vector field, we obtain
\begin{equation}\label{e84}
2 S{\phi}SX= \alpha (S{\phi}+{\phi}S)X - 2{\phi}X.
\end{equation}
Therefore, (\ref{e83}) can be written as
\begin{equation}\label{e85}
\alpha ({\phi}S-S{\phi})X=-6 {\phi}AX.
\end{equation}
Inserting $X=SY$ for $Y \in \mathcal C$ into \eqref{e85} and taking the structure tensor~$\phi$ leads to
\begin{equation*}
-\alpha S^{2}Y  - \alpha \phi S{\phi} SY = 6 ASY,
\end{equation*}
where $\mathcal C =\mathrm{ker}\eta$ denotes the maximal complex subbundle of $TM$, which is defined by a distribution $\mathcal C = \{ Y \in T_{p}M\,|\, \eta(Y)=0\}$ in $T_{p}M$, $p \in M$.

\noindent By using \eqref{e84} and Lemma~\ref{lemma 4.2} this equation gives us
\begin{equation}\label{e86}
\alpha^{2} \phi S{\phi} Y = -2 \alpha S^{2}Y   + \alpha^{2} SY - 2 \alpha Y -12 SY
\end{equation}
for all $Y \in \mathcal C$.

\vskip 6pt

On the other hand, in this section we have assumed that the normal vector field~$N$ of $M$ is $\mathfrak A$-principal. So it follows that $AX \in TM$ for all $X \in TM$. From this, the anti-commuting property with respect to $J$ and $A$ implies $\phi A X = -A \phi X$. Hence \eqref{e85} can be expressed as
\begin{equation}\label{e87}
\alpha ({\phi}S-S{\phi})X= 6 A \phi X.
\end{equation}
Putting $X=\phi Y$ into \eqref{e87}, it gives
$$
\alpha {\phi}S \phi Y = - \alpha SY - 6 AY
$$
for all $Y \in \mathcal C$. Inserting this into \eqref{e86} gives
\begin{equation}\label{e88}
3 \alpha AY - \alpha S^{2}Y + \alpha^{2} SY - \alpha Y - 6 SY =0.
\end{equation}
Taking the complex conjugate~$A$ to \eqref{e88} again and using the second equation in Lemma~\ref{lemma 4.2}, we get
\begin{equation}\label{e89}
3 \alpha Y - \alpha S^{2}Y + \alpha^{2} SY - \alpha AY - 6 SY =0,
\end{equation}
for all $Y \in \mathcal C$. Summing up \eqref{e88} and \eqref{e89}, gives $AY=Y$ for all $Y \in \mathcal C$. This gives a contradiction. In fact, it is well known that the trace of the real structure~$A$ is zero, that is, $\mathrm{Tr} A =0$ (see Lemma~1 in \cite{SM}). For an orthonormal basis $\{ \, e_{1}, e_{2} \cdots, e_{2m-2}, e_{2m-1}=\xi, e_{2m}=N \,\}$ for $T{Q^*}^{m}$, where $ e_{j} \in \mathcal C$ $(j = 1, 2, \cdots, 2m-2)$, the trace of $A$ is given by
\begin{equation*}
\begin{split}
\mathrm{Tr}A & = \sum_{i=1}^{2m} g(A e_{i}, e_{i}) \\
&= g(AN, N) + g(A \xi, \xi) + \sum_{i=1}^{2m-2} g(A e_{i}, e_{i}) \\
&= 2m-2.
\end{split}
\end{equation*}
It implies that $m=1$. But we now consider for the case $m \geq 3$.

\vskip 6pt

Consequently, this completes the proof that {\it there does not exists a Hopf real hypersurface $(\alpha \neq 0)$ in complex hyperbolic quadrics~${Q^{*}}^{m}$, $m \geq 3$, with Reeb parallel structure Jacobi operator and $\mathfrak A$-principal normal vector field}.

\vskip 10pt


\end{document}